\DeclareMathAlphabet{\mathpzc}{OT1}{pzc}{m}{it}
\numberwithin{equation}{section}
\def\eqnarray{\stepcounter{equation}\let\@currentlabel=\theequation
\global\@eqnswtrue
\tabskip\@centering\let\\=\@eqncr
$$\halign to \displaywidth\bgroup\hfil\global\@eqcnt\z@
  $\displaystyle\tabskip\z@{##}$&\global\@eqcnt\@ne
  \hfil$\displaystyle{{}##{}}$\hfil
  &\global\@eqcnt\tw@ $\displaystyle{##}$\hfil
  \tabskip\@centering&\llap{##}\tabskip\z@\cr}
\def\endeqnarray{\@@eqncr\egroup
      \global\advance\c@equation\m@ne$$\global\@ignoretrue}
\newtheorem{theorem}{Theorem}[section]
\newtheorem{corollary}[theorem]{Corollary}
\newtheorem{definition}[theorem]{Definition}
\newtheorem{lemma}[theorem]{Lemma}
\newtheorem{proposition}[theorem]{Proposition}
\newtheorem{assumption}[theorem]{Assumption}
\newtheorem{remark}[theorem]{Remark}
\numberwithin{equation}{section}
\def\Omc{\mathbb{R}^N\setminus\Omega}
\def\RR{{\mathbb{R}}}
\def\Om{\Omega}
\def\bOm{\overline{\Om}}
\def\pOm{\partial\Omega}
\title[Optimal Control of Fractional Elliptic PDEs with State Constraints]{Optimal Control of Fractional Elliptic PDEs with State Constraints
and Characterization of the dual of Fractional Order Sobolev Spaces}
\author{Harbir Antil}
\address{Department of Mathematical Sciences, George Mason University, Fairfax, VA 22030, USA.}
\email{hantil@gmu.edu}
\author{Deepanshu Verma}
\address{Department of Mathematical Sciences, George Mason University, Fairfax, VA 22030, USA.}
\email{dverma2@gmu.edu}
\author{Mahamadi Warma}
\address{University of Puerto Rico, Rio Piedras Campus, Department of Mathematics,  College of Natural Sciences,  17 University AVE. STE 1701  San Juan PR 00925-2537 (USA). }
\email{mahamadi.warma1@upr.edu, mjwarma@gmail.com}
\thanks{The first and second authors are partially supported by NSF grant DMS-1818772 and 
the Air Force Office of Scientific Research under Award NO: FA9550-19-1-0036. The third 
author is partially supported by the Air Force Office of Scientific Research under 
Award NO:  FA9550-18-1-0242.}
\keywords{Optimal control with PDE constraints, state and control constraints, Fractional Laplacian, measure valued datum, characterization of fractional dual space, regularity of solutions to state and adjoint equations.}
\subjclass[2010]{49J20, 49K20, 35S15, 65R20, 65N30}
\begin{document}

\begin{abstract}
This paper introduces the notion of state constraints for optimal control problems 
governed by fractional elliptic PDEs of order $s \in (0,1)$. There are several 
mathematical tools that are developed during the process to study this problem, for instance, 
the characterization of the dual of the fractional order Sobolev spaces and well-posedness of 
fractional PDEs with measure-valued datum. These tools are widely applicable. We show 
well-posedness of the optimal control problem and derive the first order optimality conditions. 
Notice that the adjoint equation is a fractional PDE with measure as the right-hand-side datum. We 
use the characterization of the fractional order dual spaces to study the regularity of the
state and adjoint equations. We emphasize that the classical case ($s=1$) was considered by 
E. Casas in \cite{ECasas_1986a} but almost none of the existing results are applicable to our fractional
case. 
\end{abstract}

\maketitle

\section{Introduction}

Let $\Om \subset \mathbb{R}^N$ ($N \ge 1$) be a bounded open set with boundary $\pOm$. 
The main goal of this paper is to introduce and study the following state and control 
constrained optimal control problem:
\begin{subequations}\label{eq:dcp}
 \begin{equation}\label{eq:Jd}
    \min_{(u,z)\in (U,Z)} J(u,z) 
 \end{equation}
 subject to the fractional elliptic PDE: Find $u \in U$ solving 
 \begin{equation}\label{eq:Sd}
 \begin{cases}
    (-\Delta)^s u &= z \quad \mbox{in } \Om, \\
                u &= 0 \quad \mbox{in } \RR^N\setminus \Om  ,
 \end{cases}                
 \end{equation} 
with the state constraints 
 \begin{equation}\label{eq:Ud}
    u|_{\Omega} \in \mathcal{K} := \left\{ w \in C_0(\Omega) \ : \ w(x) \le u_b(x) , 
        \quad \forall x \in \overline{\Om}  \right\} 
 \end{equation}
where $C_0(\Om)$ is the space of continuous functions in $\overline\Om$ that vanishes
on $\pOm$ and $u_b \in C(\overline\Om)$. 
Extending functions by zero outside $\Omega$, we can then identity $C_0(\Omega)$ with the space $\{ u\in C_c(\RR^N):\; u=0\;\mbox{ in }\;\RR^N\setminus\Omega\}$.

Moreover, we assume the control constraints
 \begin{equation}\label{eq:Zd}
    z \in Z_{ad} \subset 
    L^p(\Om) 
 \end{equation}
with $Z_{ad}$ being a non-empty, closed, and convex set and the real number $p$ satisfies
 \end{subequations}
\begin{equation}\label{cond-p}
\begin{cases}
p>\frac{N}{2s}\;\;&\mbox{ if }\; N>2s,\\
p>1 \;\;&\mbox{ if }\; N=2s,\\
p=1\;\;&\mbox{ if }\; N<2s.
\end{cases}
\end{equation} 
Notice that for $z \in L^p(\Om)$, with $p$ as given in \eqref{cond-p}, we have that 
$u \in L^\infty(\Om)$, see \cite{antil2017optimal} for details. 

Optimal control of fractional PDEs with control constraints has recently received a lot 
of attention. We refer to \cite{antil2017optimal} for the optimal control of fractional 
semilinear PDEs with both spectral and integral fractional Laplacians with distributed control, 
see also  \cite{MDElia_MGunzburger_2014a} for such a control of an integral operator. We refer to 
\cite{HAntil_JPfefferer_SRogovs_2018a} for the boundary control with spectral fractional Laplacian 
and \cite{HAntil_RKhatri_MWarma_2018a, antil2019external} for the exterior optimal control of 
fractional PDEs. See \cite{HAntil_MWarma_2019a,antil2018b} for the optimal control of quasi-linear
fractional PDEs where the control lies in the coefficient.

We remark that the case $s=1$ is classical 
see for instance \cite{ECasas_1986a}, we also refer to \cite{ECasas_1993a,ECasas_1997a}, see also
\cite{ECasas_MMateos_BVexler_2014a} for more recent results. We also refer to  
to the monographs \cite{MHinze_RPinnau_MUlbrich_SUlbrich_2009a,FTroeltzsch_2010a} and the references therein.
Nevertheless, none of these existing works are directly applicable to the case of fractional state 
constraint as stated in \eqref{eq:dcp}. 

The key difficulties in studying \eqref{eq:dcp}--\eqref{cond-p} and 
the novelties of this paper are outlined next. 
    \begin{itemize}
        \item {\bf Nonlocal equation.} The equation \eqref{eq:Sd} is nonlocal, see 
              Section~\ref{s:not} for the precise definition of the nonlocal operator 
              $(-\Delta)^s$.
              
        \item {\bf Continuity of the state solution.} Similarly to the classical case, we need to 
              show that the solution $u$ to \eqref{eq:Sd} is continuous whenever $z \in L^p(\Om)$. 
              Recall that for such a $z$, that the solution $u\in L^\infty(\Om)$ is due to our previous work 
              \cite{antil2017optimal}. Our 
              continuity result in this paper, in a sense, weakens the regularity requirements on 
              $z$ in comparison to the celebrated result of 
              \cite[Proposition 1.1]{XRosOton_JSerra_2014a} where the authors assumed that 
              $z \in L^\infty(\Om)$. 
              
         \item {\bf Equation with measure valued data.} The adjoint equation is a fractional
              PDE with measure-valued datum. We shall first show the well-posedness of such PDEs
              in the space $L^{p'}(\Om)$ where $p$ is as in \eqref{cond-p}.                   
              
        \item {\bf Characterization of the dual space $\widetilde{W}^{-s,p'}$.} Let $1\le p<\infty$, $p'=\frac{p}{p-1}$ and let $\widetilde{W}^{-s,p'}$ denote the dual of $\widetilde{W}_0^{s,p}$ (see Section \ref{s:not}).
        Recall that the 
              classical dual space, $W^{-1,p}(\Om)$, of $W_0^{1,p}(\Om)$ can be characterized 
              in terms of vector-valued 
              $L^p(\Om)$-spaces \cite[Theorem~3.9]{RAAdams_JJFFournier_2003a}. 
              Such a characterization of the space $\widetilde{W}^{-s,p'}$ is essential to study the regularity 
              of the aforementioned adjoint equation (fractional PDE with measure-valued datum)
              and the state equation with weaker than $L^p(\Om)$ datum. 
              However, to the best of our knowledge, this characterization has remained open for 
              the fractional order Sobolev spaces. This characterization obtained here is one of the main novelty of the current paper.        
              
   \item {\bf Higher regularity of solutions to the Dirichlet problem \eqref{eq:Sd}.}    Using the above characterization of  the dual spaces of the fractional order Sobolev spaces, we have shown that for $z\in \widetilde{W}^{-t,p}$, for appropriate $p$ and $0<t<1$, solutions of the Dirichlet problem  \eqref{eq:Sd} are also continuous up to the boundary of $\Omega$. This is the first time that such a regularity result has been proved  (with very weak right-hand side) for the fractional Laplace operator.
    \end{itemize}
    
We recognize that the fractional operators are starting to play a pivotal in several applications: 
imaging science, 
phase field models, 
Magnetotellurics in geophysics, 
electrical response in cardiac tissue,  
diffusion of biological species, 
and data science, see \cite{antil2019external} and references therein.
In fact under a very general setting, the article \cite{AGrigoryan_TKumagai_2008a} shows that there 
are only two types of heat kernels: diffusion (exponential), or heat kernels for $s$-stable 
processes (polynomial). Notice that the fractional Laplace operator is the generator of  the $s$-stable L\'evy
process. For a general description of nonlocal/fractional heat kernels and their relationship 
to stochastic processes, we refer to \cite{chen2017heat, MR1974415}. 

The rest of the paper is organized as follows. In Section~\ref{s:not}, we first introduce
the underlying notation and state some preliminary results. These results are well-known.
Our main work starts from Section~\ref{s:sa} where we first establish the continuity of 
solution to the state equation. In addition, we establish the well-posedness of the fractional
PDEs with measured valued datum. In Section~\ref{eq:ocp}, we show the well-posedness of the
control problem and derive the optimality conditions. In Section~\ref{s:char} we derive
the characterization of dual spaces of fractional order Sobolev spaces. We conclude this paper by giving a higher regularity result for the associated adjoint equation in Section \ref{regularity}.

\section{Notation and preliminaries}
\label{s:not}

We begin this section by introducing some notation and preliminary results. 
We follow the notation from our previous works \cite{antil2018b,HAntil_RKhatri_MWarma_2018a}. Unless 
otherwise stated, $\Omega \subset \RR^N$ $(N\ge 1)$ is a bounded open set, $0 < s < 1$ 
and $1 \le p < \infty$. For a sufficiently regular function $u$ defined on $\RR^N$, we shall denote by $D_{s,p} u$ the function 
defined on $\RR^N \times\RR^N$ by 
    \[
        D_{s,p} u[x,y] :=\frac{u(x)-u(y)}{|x-y|^{{\frac{N}{p}}+s}} .
    \]
Then we define the Sobolev space  
    \[
        W^{s,p}(\Om) := \Big\{ u \in L^p(\Om) \ : \ D_{s,p} u \in 
                L^p(\Om \times \Om)  \Big\} 
    \]
which we endow with the norm  
 \[
    \|u\|_{W^{s,p}(\Om)} := \left(\int_\Om |u|^p\;dx 
        + \|D_{s,p} u\|_{L^p(\Om\times\Om)}^p \right)^{\frac1p}.      
 \]
 
We let
\begin{equation*}
 W_{0}^{s,p}(\Omega ):=\overline{\mathcal{D}(\Omega )}^{W^{s,p}(\Omega )} ,
\end{equation*}
where $\mathcal{D}(\Om)$ denotes the space of smooth functions with compact support in $\Omega$. 

We have taken the following result from \cite[Theorem 1.4.2.4, p.25]{Gris} (see also \cite{Chen,War}).
\begin{theorem}\label{theo-gris}
Let $\Omega\subset\RR^N$ be a bounded open set with a Lipschitz continuous boundary and $1<p<\infty$. 
Then the following assertions hold.
\begin{enumerate}
\item[(a)] If $0<s\le\frac 1p$, then $W^{s,p}(\Omega )= W_0^{s,p}(\Omega)$. 
\item[(b)] If $\frac 1p<s<1$, then $W_0^{s,p}(\Omega)$ is a proper closed subspace of $W^{s,p}(\Omega)$.
\end{enumerate}
\end{theorem}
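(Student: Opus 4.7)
\textbf{Proof plan for Theorem \ref{theo-gris}.}

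The inclusion $W_0^{s,p}(\Omega)\subset W^{s,p}(\Omega)$ is automatic from the definition, so part (a) reduces to showing that $\mathcal{D}(\Omega)$ is dense in $W^{s,p}(\Omega)$ whenever $0<s\le 1/p$, while part (b) requires exhibiting an element of $W^{s,p}(\Omega)$ not in the closure $W_0^{s,p}(\Omega)$.

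For part (a), the plan is the classical truncation-then-mollification scheme. Given $u\in W^{s,p}(\Omega)$, I first extend $u$ to $\tilde u\in W^{s,p}(\RR^N)$ using a bounded linear extension operator (available because $\partial\Omega$ is Lipschitz). Next I introduce cutoffs $\eta_\varepsilon\in C_c^\infty(\Omega)$ with $\eta_\varepsilon=1$ on $\{x\in\Omega:\operatorname{dist}(x,\partial\Omega)>2\varepsilon\}$, $\eta_\varepsilon=0$ on $\{x\in\Omega:\operatorname{dist}(x,\partial\Omega)<\varepsilon\}$, and $\|\nabla\eta_\varepsilon\|_\infty\le C/\varepsilon$. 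A standard mollification of $\eta_\varepsilon u$ then lives in $\mathcal{D}(\Omega)$, so it suffices to prove $\eta_\varepsilon u\to u$ in $W^{s,p}(\Omega)$. Convergence in $L^p$ is immediate by dominated convergence. For the Gagliardo seminorm, writing $(1-\eta_\varepsilon)u(x)-(1-\eta_\varepsilon)u(y)=(1-\eta_\varepsilon(x))(u(x)-u(y))+u(y)(\eta_\varepsilon(y)-\eta_\varepsilon(x))$ reduces everything to showing
\[
I_\varepsilon := \int_\Omega |u(y)|^p\int_\Omega \frac{|\eta_\varepsilon(x)-\eta_\varepsilon(y)|^p}{|x-y|^{N+sp}}\,dx\,dy \longrightarrow 0.
\]
The inner integral is bounded by $C\,d(y,\partial\Omega)^{-sp}$ (up to a logarithmic factor when $sp=1$), so the key ingredient is a Hardy-type inequality of the form $\int_\Omega |u(y)|^p\,d(y,\partial\Omega)^{-sp}\,dy\le C\|u\|_{W^{s,p}(\Omega)}^p$ for $sp<1$ and its logarithmic analogue for $sp=1$; combined with the fact that the integrand is supported in the thin shell $\{d(\cdot,\partial\Omega)<2\varepsilon\}$, dominated convergence yields $I_\varepsilon\to 0$. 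This Hardy-type inequality in the critical case $sp=1$ is the main obstacle.

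For part (b), the key input is the trace theorem: when $s>1/p$, there is a continuous surjective trace operator $\operatorname{Tr}:W^{s,p}(\Omega)\to W^{s-1/p,p}(\partial\Omega)$, and I will use that its kernel coincides with $W_0^{s,p}(\Omega)$. Granting this, the constant function $u\equiv 1\in W^{s,p}(\Omega)$ has $\operatorname{Tr}(u)=1\not\equiv 0$, hence $u\notin W_0^{s,p}(\Omega)$, which gives the strict inclusion. Closedness of $W_0^{s,p}(\Omega)$ inside $W^{s,p}(\Omega)$ is automatic since it is defined as a closure. The inclusion $\ker\operatorname{Tr}\supset W_0^{s,p}(\Omega)$ follows from continuity of $\operatorname{Tr}$ and its vanishing on $\mathcal{D}(\Omega)$; the reverse inclusion is obtained by a local flattening of the boundary, reducing to the half-space, extending by zero across $\partial\Omega$, and mollifying after a small translation into $\Omega$—a construction whose validity hinges precisely on $s>1/p$ so that extension by zero preserves the $W^{s,p}$ regularity.
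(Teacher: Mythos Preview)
The paper does not prove this theorem at all: it is quoted verbatim from Grisvard's monograph (Theorem 1.4.2.4 in \cite{Gris}), with additional references to \cite{Chen,War}, and no argument is given. So there is no ``paper's own proof'' to compare against.

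Your proposal, by contrast, is an actual proof outline, and the strategy is the standard one found in the literature. The cutoff-plus-mollification argument for (a) together with the fractional Hardy inequality is exactly the route taken in Grisvard and in later treatments; you are right that the critical case $sp=1$ is the delicate point and requires the logarithmic Hardy inequality, which you flag but do not carry out. For (b), using the trace operator and the constant function $1$ is again the textbook argument, and your identification of $\ker\operatorname{Tr}$ with $W_0^{s,p}(\Omega)$ via flattening, zero-extension, and translated mollification is the usual mechanism. One small caution: the claim that the inner integral $\int_\Omega |\eta_\varepsilon(x)-\eta_\varepsilon(y)|^p|x-y|^{-N-sp}\,dx$ is bounded by $C\,d(y,\partial\Omega)^{-sp}$ uniformly in $\varepsilon$ is not quite what you need---what matters is that this quantity, restricted to the shell where $1-\eta_\varepsilon$ is nontrivial, is dominated by an integrable majorant independent of $\varepsilon$, and this is where the Hardy inequality enters. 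As written the sketch is correct in spirit but would need care to make rigorous, particularly at $sp=1$.
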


Since $\Omega$ is assumed to be bounded, we have the following continuous embedding:
\begin{equation}\label{inj1}
W_0^{s,2}(\Omega)\hookrightarrow
\begin{cases}
L^{\frac{2N}{N-2s}}(\Omega)\;\;&\mbox{ if }\; N>2s,\\
L^p(\Omega),\;\;p\in[1,\infty)\;\;&\mbox{ if }\; N=2s,\\
C^{0,s-\frac{N}{2}}(\bOm)\;\;&\mbox{ if }\; N<2s.
\end{cases}
\end{equation}

We shall let
\begin{align*}
2^\star:=\frac{2N}{N-2s},\;\;N>2s.
\end{align*}

Using potential theory, a complete characterization of $W_0^{s,p}(\Omega)$ for arbitrary bounded open sets 
has been given in \cite{War}. Notice that from Theorem~\ref{theo-gris} it follows that for a bounded open
set with Lipschitz boundary, if $\frac{1}{p} < s < 1$, then 
\begin{equation}
\Vert u\Vert_{W_{0}^{s,p}(\Om)}=\|D_{s,p} u\|_{L^p(\Om\times\Om)}
\label{norm-26}
\end{equation}%
defines an equivalent norm on $W_{0}^{s,p}(\Om)$. We shall always use this norm for the space 
$W_{0}^{s,p}(\Omega)$.


In order to study the fractional Laplace equation \eqref{eq:Sd} we need to consider 
the following function space 
\begin{align*}
\widetilde{W}_0^{s,p}(\Om):=\Big\{u\in W^{s,p}(\RR^N):\;u=0\;\mbox{ on }\;\RR^N\setminus\Omega\Big\}.
\end{align*}
Let $\Omega\subset\RR^N$ be a bounded open set with a Lipschitz continuous boundary. It has been shown in \cite[Theorem 6]{Val} that $\mathcal D(\Omega)$ is dense in $\widetilde{W}_0^{s,p}(\Om)$. Moreover, for every $0<s<1$, we have
\begin{align}
\|u\|_{\widetilde{W}_0^{s,p}(\Om)}^p:=&\|D_{s,p} u\|_{L^p(\RR^N\times\RR^N)}^p=\int_{\RR^N}\int_{\RR^N}\frac{|u(x)-u(y)|^p}{|x-y|^{N+sp}}\;dxdy \nonumber \\
=& \int_{\Omega}\int_{\Omega}\frac{|u(x)-u(y)|^p}{|x-y|^{N+sp}}\;dxdy +2\int_{\Omega}|u(x)|^p\int_{\RR^N\setminus\Omega}\frac{1}{|x-y|^{N+sp}}\;dy\;dx \nonumber \\
=&\|D_{s,p} u\|_{L^p(\Omega\times\Omega)}^p+\int_{\Omega}|u|^p\kappa(x)\;dx,
 \label{eq:eqnorm}
\end{align}
where
\begin{align*}
\kappa(x)=2\int_{\RR^N\setminus\Omega}\frac{1}{|x-y|^{N+sp}}\;dy.
\end{align*}

\begin{remark}\label{remark}
{\em We mention the following observations.
\begin{enumerate}
\item The embedding \eqref{inj1} holds with $W_0^{s,2}(\Om)$ replaced by $\widetilde{W}_0^{s,2}(\Om)$.

\item Let $p$ satisfy \eqref{cond-p} and $p':=\frac{p}{p-1}$. We claim that $\widetilde{W}_0^{s,2}(\Om)\hookrightarrow L^{p'}(\Omega)$. Indeed, we have the following three cases.
\begin{itemize}
\item If $N>2s$, that is $p>\frac{N}{2s}$, then $p'<\frac{N}{N-2s}<\frac{2N}{N-2s}$. In this case we have that $\widetilde{W}_0^{s,2}(\Om)\hookrightarrow L^{\frac{2N}{N-2s}}(\Omega) \hookrightarrow L^{p'}(\Omega)$, where we have used \eqref{inj1} and the fact that $\Omega$ is bounded.

\item If $N=2s$, that is $p\in (1,\infty)$ is any a number , then $\widetilde{W}_0^{s,2}(\Om)\hookrightarrow L^{p'}(\Omega)$ by \eqref{inj1}.

\item If $N<2s$, that is $p=1$, then $p'=\infty$ and $\widetilde{W}_0^{s,2}(\Om)\hookrightarrow L^{p'}(\Omega)$ by \eqref{inj1}.
\end{itemize}

\end{enumerate}
}
\end{remark}
We next state an important result for 
$\widetilde{W}_0^{s,p}(\Om)$  (recall Theorem~\ref{theo-gris} for $W_0^{s,p}(\Om)$). For a proof, 
we refer to \cite[Theorem~2.3]{antil2018b}. 

\begin{theorem}
\label{thm:gris2}
Let $\Omega\subset\RR^N$ be a bounded open set with a Lipschitz continuous boundary and $1<p<\infty$. 
If $\frac 1p<s<1$, then $\widetilde{W}_0^{s,p}(\Om)=W_0^{s,p}(\Omega)$ with equivalent norms. 
\end{theorem}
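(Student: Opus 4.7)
The plan is to use that $\mathcal{D}(\Omega)$ is dense in both spaces — in $\widetilde{W}_0^{s,p}(\Omega)$ by the result of Valdinoci cited above, and in $W_0^{s,p}(\Omega)$ by definition — and reduce the entire theorem to proving that the two norms are equivalent on $\mathcal{D}(\Omega)$. Once such an equivalence is established, the set-theoretic equality together with equivalence of norms follows by completing with respect to either norm.

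For the easy bound $\|\varphi\|_{W^{s,p}(\Omega)}\le C\|\varphi\|_{\widetilde{W}_0^{s,p}(\Omega)}$, I would exploit the decomposition \eqref{eq:eqnorm}: the Gagliardo seminorm over $\Omega\times\Omega$ is trivially dominated by its $\RR^N\times\RR^N$ counterpart, and for the $L^p$ part one uses that $\Omega$ is bounded, say of diameter $R$, so that $\Omega\subset B(x,R)$ for every $x\in\Omega$, giving
\[
\kappa(x)\ge 2\int_{|x-y|>R}\frac{dy}{|x-y|^{N+sp}}=\frac{2|\mathbb{S}^{N-1}|}{sp\,R^{sp}}=:\kappa_0>0.
\]
Hence $\|\varphi\|_{L^p(\Omega)}^p\le \kappa_0^{-1}\int_\Omega|\varphi|^p\kappa\,dx\le \kappa_0^{-1}\|\varphi\|_{\widetilde{W}_0^{s,p}(\Omega)}^p$.

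For the reverse bound, again by \eqref{eq:eqnorm} it suffices to control $\int_\Omega|\varphi|^p\kappa\,dx$ by the $W^{s,p}(\Omega)$-norm of $\varphi\in\mathcal{D}(\Omega)$. Writing $d(x):=\mathrm{dist}(x,\partial\Omega)$ and using that $|x-y|\ge d(x)$ for every $y\in\RR^N\setminus\Omega$, a polar-coordinates computation yields the pointwise bound
\[
\kappa(x)\le 2\int_{|x-y|>d(x)}\frac{dy}{|x-y|^{N+sp}}=\frac{2|\mathbb{S}^{N-1}|}{sp}\,d(x)^{-sp}.
\]
The desired estimate thus reduces to the fractional Hardy inequality
\[
\int_\Omega \frac{|\varphi(x)|^p}{d(x)^{sp}}\,dx \le C\,\|\varphi\|_{W^{s,p}(\Omega)}^p,\qquad \varphi\in\mathcal{D}(\Omega),
\]
valid on bounded Lipschitz domains precisely in the regime $sp>1$, i.e.\ $s>1/p$.

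The main obstacle is this Hardy inequality: it is the mechanism by which the blow-up of the weight $\kappa(x)\sim d(x)^{-sp}$ near $\partial\Omega$ is absorbed into the interior Gagliardo seminorm. The hypothesis $s>1/p$ is sharp, in accordance with Theorem~\ref{theo-gris}: for $s\le 1/p$ a traceless extension-by-zero is not bounded into $W^{s,p}(\RR^N)$ and the identification $\widetilde{W}_0^{s,p}(\Omega)=W_0^{s,p}(\Omega)$ must fail. The standard proof of the Hardy inequality proceeds by localization and flattening using the Lipschitz atlas of $\partial\Omega$, reducing to the half-space case, where it can be verified by a one-dimensional slicing argument along the normal directions.
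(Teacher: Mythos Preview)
The paper does not supply its own proof of Theorem~\ref{thm:gris2}; it simply quotes the result from \cite[Theorem~2.3]{antil2018b}. There is therefore no in-paper argument to compare your proposal against.

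That said, your outline is correct and is in fact the standard route to this identification. Density of $\mathcal{D}(\Omega)$ in both spaces reduces the matter to a norm equivalence on test functions, and the decomposition \eqref{eq:eqnorm} isolates the only nontrivial term, namely $\int_\Omega|\varphi|^p\kappa\,dx$. Your lower bound $\kappa\ge\kappa_0>0$ (from $\Omega$ bounded) handles the easy inequality, and your pointwise upper bound $\kappa(x)\le C\,d(x)^{-sp}$ together with the fractional Hardy inequality on Lipschitz domains (valid precisely when $sp>1$) gives the reverse one. This is exactly the mechanism one finds in the literature (e.g.\ Dyda, Chen--Song, or the reference \cite{antil2018b} cited by the paper), and your remark that the threshold $s=1/p$ is sharp is consistent with Theorem~\ref{theo-gris}. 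The only thing left implicit is a precise reference or self-contained proof for the fractional Hardy inequality on Lipschitz domains, but you correctly identify it as the crux and sketch the flattening-plus-one-dimensional reduction that proves it.
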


Thus from Theorem~\ref{thm:gris2} it follows that for a bounded open set with Lipschitz 
boundary, 
if $\frac{1}{p} < s < 1$, then 
\begin{equation}
\label{eq:blah}
\Vert u\Vert_{\widetilde{W}_{0}^{s,p}(\Om)}=\|D_{s,p} u\|_{L^p(\Om\times\Om)} ,
\end{equation}%
in other words the second term in \eqref{eq:eqnorm} is not relevant. 

If $0<s<1$, $p\in (1,\infty )$ and $p^{\prime }:=\frac{p}{p-1}$, then  the space $
\widetilde{W}^{-s,p^{\prime }}(\Om)$ is defined as the dual of $\widetilde{W}_{0}^{s,p}(\Om )$, 
i.e., $ \widetilde{W}^{-s,p^{\prime }}(\Om ):=(\widetilde{W}_{0}^{s,p}(\Om ))^{\star }$.
We notice that a characterization of this dual space given in Section~\ref{s:char} is one of the novelties of this 
paper. 

After all these preparations, we are now ready to define the fractional Laplacian. 
We set 
\begin{equation*}
\mathbb{L}_s^{1}(\RR^N):=\left\{u:\RR^N\rightarrow
\mathbb{R}\;\mbox{
measurable, }\;\int_{\RR^N}\frac{|u(x)|}{(1+|x|)^{N+2s}}%
\;dx<\infty \right\}.
\end{equation*}%
For $u\in \mathbb{L}_s^{1}(\RR^N)$ and $\varepsilon >0$, we let
\begin{equation*}
(-\Delta )_{\varepsilon }^{s}u(x)=C_{N,s}\int_{\{y\in \RR^N,|y-x|>\varepsilon \}}
\frac{u(x)-u(y)}{|x-y|^{N+2s}}dy,\;\;x\in\RR^N,
\end{equation*}%
where $C_{N,s}$ is a normalization constant and it is given by
\begin{equation}\label{CN}
C_{N,s}:=\frac{s2^{2s}\Gamma\left(\frac{2s+N}{2}\right)}{\pi^{\frac
N2}\Gamma(1-s)},
\end{equation}%
and $\Gamma $ is the standard Euler Gamma function (see, e.g. \cite{Caf3,NPV,War-DN1,War}). 
We then define the {\bf fractional Laplacian} 
$(-\Delta )^{s}$ for $u\in \mathbb{L}_s^{1}(\RR^N)$  by the formula
\begin{align}
(-\Delta )^{s}u(x)=C_{N,s}\mbox{P.V.}\int_{\RR^N}\frac{u(x)-u(y)}{|x-y|^{N+2s}}dy 
=\lim_{\varepsilon \downarrow 0}(-\Delta )_{\varepsilon
}^{s}u(x),\;\;x\in\RR^N,\label{eq11}
\end{align}
provided that the limit exists. Notice that  \cite[Proposition 2.2]{BPS} shows that 
for $u \in \mathcal{D}(\Om)$, we have 
 \[
    \lim_{s\uparrow 1^-}\int_{\RR^N} u (-\Delta)^su\;dx 
        = \int_{\RR^N} |\nabla u|^2 dx 
        = - \int_{\RR^N} u \Delta u\;dx 
        = - \int_{\Om} u \Delta u\;dx.
 \]
This limit makes use of the constant $C_{N,s}$. 

We define the operator $(-\Delta)_D^s$ in $L^2(\Omega)$ as follows
\begin{equation}\label{eq:DeltasD}
D((-\Delta)_D^s)=\Big\{u\in \widetilde{W}_0^{s,2}(\Om):\; (-\Delta)^su\in L^2(\Omega)\Big\},\;\;(-\Delta)_D^s(u|_{\Om})=(-\Delta)^su\;\mbox{ in }\;\Omega.
\end{equation} 
Notice that $(-\Delta)_D^s$ is the realization in $L^2(\Om)$ of the fractional Laplace operator 
$(-\Delta)^s$ with the Dirichlet exterior condition $u=0$ in $\RR^N\setminus\Omega$. We refer to \cite{ClWa} for a rigorous definition of $(-\Delta)_D^s$.

Finally, we close this section by recalling the integration-by-parts formula for $(-\Delta)^s$ 
(see e.g. \cite{SDipierro_XRosOton_EValdinoci_2017a}). 
 \begin{proposition}[\bf The integration by parts formula for $(-\Delta)^s$]
 \label{prop:prop}
 Let $u \in \widetilde{W}_0^{s,2}(\Om)$ be such that $(-\Delta)^su \in L^2(\Omega)$. 
 Then for every $v\in \widetilde{W}_0^{s,2}(\Om)$ we have 
      \begin{align}\label{Int-Part}
       \frac{C_{N,s}}{2} 
        \int_{\RR^N}\int_{\RR^N}
         \frac{(u(x)-u(y))(v(x)-v(y))}{|x-y|^{N+2s}} \;dxdy 
        = \int_\Om v(-\Delta)^s u\;dx . 
      \end{align}
 \end{proposition}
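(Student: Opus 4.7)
The plan is to establish \eqref{Int-Part} first for $u,v\in\mathcal D(\Omega)$ by a symmetrization-plus-Fubini argument on the principal-value truncation of $(-\Delta)^s$, and then to extend in two density steps: first in $v$ for smooth $u$, then in $u$ by transferring $(-\Delta)^s$ onto the smooth test function.

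For $u,v\in\mathcal D(\Omega)$, extended by zero to $\RR^N$, introduce
$$\mathcal E_\vep(u,v):=\frac{C_{N,s}}{2}\iint_{\{|x-y|>\vep\}}\frac{(u(x)-u(y))(v(x)-v(y))}{|x-y|^{N+2s}}\,dxdy.$$
Expanding the product, swapping $x\leftrightarrow y$ in two of the four resulting terms (legitimate on the truncated set by Fubini) and collecting rewrites this as $\int_{\RR^N}v(x)(-\Delta)^s_\vep u(x)\,dx$. Letting $\vep\downarrow 0$, the left side converges to the full double integral by dominated convergence, since Cauchy--Schwarz applied to $D_{s,2}u$ and $D_{s,2}v$ shows that the untruncated integrand lies in $L^1(\RR^N\times\RR^N)$; the right side converges to $\int_\Omega v(-\Delta)^s u\,dx$ because, for smooth $u$, the truncations converge uniformly on the compact set $\mathrm{supp}\,v$. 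This proves \eqref{Int-Part} for smooth $u,v$ and, as a by-product, yields the distributional symmetry
$$\int_{\RR^N}v(-\Delta)^s u\,dx=\int_{\RR^N}u(-\Delta)^s v\,dx,\qquad u,v\in\mathcal D(\Omega).$$
For fixed $u\in\mathcal D(\Omega)$, both sides of \eqref{Int-Part} are continuous linear functionals of $v$ on $\widetilde W_0^{s,2}(\Omega)$ (the left side by Cauchy--Schwarz, the right side via $(-\Delta)^su\in L^2(\Omega)$ and the embedding $\widetilde W_0^{s,2}(\Omega)\hookrightarrow L^2(\Omega)$ recalled in Remark~\ref{remark}), so density of $\mathcal D(\Omega)$ in $\widetilde W_0^{s,2}(\Omega)$ extends \eqref{Int-Part} to all $v\in\widetilde W_0^{s,2}(\Omega)$ whenever $u$ is smooth.

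To reach a general $u$, choose $u_n\in\mathcal D(\Omega)$ with $u_n\to u$ in $\widetilde W_0^{s,2}(\Omega)$ and fix $v\in\mathcal D(\Omega)$. By the preceding step combined with the symmetry above, $\mathcal E(u_n,v)=\int_{\RR^N}u_n(-\Delta)^sv\,dx$. Sending $n\to\infty$, the left side tends to $\mathcal E(u,v)$ by bilinear continuity, and the right side tends to $\int_{\RR^N}u(-\Delta)^sv\,dx$ since $u_n\to u$ in $L^2(\RR^N)$ while $(-\Delta)^sv\in L^2(\RR^N)$ (smooth with decay $|(-\Delta)^sv(x)|\le C(1+|x|)^{-N-2s}$ because $v$ is compactly supported). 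Interpreting the hypothesis $(-\Delta)^su\in L^2(\Omega)$ distributionally identifies this limit with $\int_\Omega v(-\Delta)^su\,dx$, which establishes \eqref{Int-Part} for smooth $v$ and general $u$; a last density extension in $v$ finishes the proof. The main obstacle is precisely this final density step in $u$: one has no reason to expect $(-\Delta)^su_n\to(-\Delta)^su$ in $L^2(\Omega)$ merely from $\widetilde W_0^{s,2}$-convergence, so the decisive device is moving $(-\Delta)^s$ onto the smooth factor through the distributional symmetry before passing to the limit.
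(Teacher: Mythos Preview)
The paper does not prove this proposition; it is simply recalled from \cite{SDipierro_XRosOton_EValdinoci_2017a} without argument. Your proof is correct and follows the standard route: establish \eqref{Int-Part} for $u,v\in\mathcal D(\Omega)$ by symmetrizing the $\varepsilon$-truncated double integral, then extend by density, with the key device (which you correctly isolate) of moving $(-\Delta)^s$ onto the smooth factor before passing to the limit in $u$, since $\widetilde W_0^{s,2}$-convergence of $u_n$ gives no control on $(-\Delta)^su_n$ in $L^2(\Omega)$.

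Two minor remarks. First, the density of $\mathcal D(\Omega)$ in $\widetilde W_0^{s,2}(\Omega)$ that you invoke is stated in the paper only for Lipschitz $\Omega$ (citing \cite{Val}); the proposition itself carries no explicit hypothesis on $\Omega$, so strictly speaking your argument needs that standing assumption. Second, your phrase ``interpreting the hypothesis $(-\Delta)^su\in L^2(\Omega)$ distributionally'' is doing real work: the paper defines $(-\Delta)^s$ pointwise via the principal value in \eqref{eq11}, so one should note that for $u\in\widetilde W_0^{s,2}(\Omega)$ the natural reading of the hypothesis (consistent with the operator $(-\Delta)^s_D$ in \eqref{eq:DeltasD}) is indeed the distributional one, under which your identification $\int_\Omega u\,(-\Delta)^sv\,dx=\int_\Omega v\,(-\Delta)^su\,dx$ for $v\in\mathcal D(\Omega)$ becomes tautological.
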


\section{State and adjoint equations}\label{s:sa}

Throughout the remainder of the paper, given a Banach space $X$ and its dual $X^\star$, we shall denote by $\langle \cdot,\cdot\rangle_{X^\star,X}$ their duality pairing.

The purpose of this section is to show that the weak solutions to \eqref{eq:Sd} are continuous
and to study the existence and uniqueness of $L^{p'}(\Om)$-solutions to the system
 \begin{equation}\label{eq:Sda}
 \begin{cases}
    (-\Delta)^s u &= \mu \quad \mbox{in } \Om, \\
                u &= 0 \quad \mbox{in } \RR^N\setminus \Om  ,
 \end{cases}                
 \end{equation} 
where $\mu \in \mathcal{M}(\Omega)$. Here $\mathcal{M}(\Om)$ denotes the space of all
Radon measures on $\Om$. More precisely, $\mathcal{M}(\Om) = C_0(\Om)^*$ i.e.,
$\mathcal{M}(\Om)$ is the dual of $C_0(\Om)$ such that 
    \[
        \langle \mu , v \rangle_{(C_0(\Om))^\star,C_0(\Om)} = \int_\Om v\;d\mu , 
        \quad \mu \in \mathcal{M}(\Om), \quad v \in C_0(\Om) .
    \]
In addition, we have the following norm on this space:
    \[
        \|\mu\|_{\mathcal{M}(\Om)} = \sup_{v \in C_0(\Om),|v|\le 1} \int_\Om v\;d\mu . 
    \]    

We will first show the continuity of weak solutions 
to \eqref{eq:Sd}. We recall that the paper \cite{XRosOton_JSerra_2014a} proves the optimal 
H\"older $C^s$-regularity of $u$ under the condition that the datum $z \in L^\infty(\Omega)$. 
However, in our setting we have only assumed that $z \in L^p(\Om)$, therefore the result
of \cite{XRosOton_JSerra_2014a} does not apply. Before we recall the results 
from \cite{antil2017optimal} and \cite{XRosOton_JSerra_2014a}, respectively, we state
the notion of weak solution to \eqref{eq:Sd}. 

 \begin{definition}[\bf Weak solution to Dirichlet problem] 
 \label{def:weak_d}
    Let $z \in \widetilde{W}^{-s,2}(\Om)$. A function $u \in \widetilde{W}_0^{s,2}(\Om)$ is said 
    to be a weak solution to \eqref{eq:Sd} if the identity
    \begin{equation*}\label{eq:vw_d}
      \frac{C_{N,s}}{2} \int_{\RR^N} \int_{\RR^N}   
          \frac{(u(x)-u(y))(v(x)-v(y))}{|x-y|^{N+2s}} \; dxdy 
         = \langle z, v\rangle , \quad \forall v \in \widetilde{W}_0^{s,2}(\Om),
    \end{equation*}
     holds.
  \end{definition} 

    \begin{proposition}
    \label{prop:Antil_Warma}
        Let $\Omega$ be a bounded Lipschitz domain. Assume that $z\in L^p(\Omega)$ with
        $p$ as in \eqref{cond-p}. Then every weak solution $u$ of  \eqref{eq:Sd} 
        belongs to $L^\infty(\Omega)$ and there is a constant $C=C(N,s,p,\Omega)>0$ such that        
        \begin{align}\label{inf-norm}
            \|u\|_{L^\infty(\Omega)}\le C\|z\|_{L^p(\Omega)}.
        \end{align}    
    \end{proposition}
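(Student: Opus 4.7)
The natural approach is a Stampacchia/De~Giorgi truncation argument applied to the weak formulation from Definition~\ref{def:weak_d}, since the three regimes in \eqref{cond-p} correspond exactly to the three cases of the Sobolev embedding \eqref{inj1} for $\widetilde{W}_0^{s,2}(\Omega)$. The plan is to test against truncations $(u-k)^+$ for $k>0$, combine the resulting coercivity estimate with the appropriate embedding, and then apply the classical Stampacchia lemma to conclude that the level sets $A(k):=\{x\in\Omega:u(x)>k\}$ collapse for $k$ large enough.

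First I would observe that whenever $u\in\widetilde{W}_0^{s,2}(\Omega)$ and $k>0$, the truncation $(u-k)^+$ again belongs to $\widetilde{W}_0^{s,2}(\Omega)$, and the pointwise inequality
\begin{equation*}
\bigl(u(x)-u(y)\bigr)\bigl((u-k)^+(x)-(u-k)^+(y)\bigr)\ge \bigl((u-k)^+(x)-(u-k)^+(y)\bigr)^2
\end{equation*}
holds. Testing the weak formulation with $v=(u-k)^+$ and using Remark~\ref{remark} together with \eqref{eq:blah} therefore yields
\begin{equation*}
\|(u-k)^+\|_{\widetilde{W}_0^{s,2}(\Omega)}^2 \le C\int_{A(k)} z\,(u-k)^+\,dx.
\end{equation*}
In the main case $N>2s$ the embedding $\widetilde{W}_0^{s,2}(\Omega)\hookrightarrow L^{2^\star}(\Omega)$ is applied on the left-hand side, while on the right I use H\"older's inequality with exponents $p$, $2^\star$ and an interpolation exponent to extract a factor $|A(k)|^\gamma$ with $\gamma>0$; here the hypothesis $p>N/(2s)$ is precisely what makes $\gamma$ positive. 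After dividing, one obtains an inequality of the form
\begin{equation*}
\|(u-k)^+\|_{L^{2^\star}(\Omega)}\le C\,\|z\|_{L^p(\Omega)}\,|A(k)|^\gamma.
\end{equation*}
For any $h>k$ one has $(h-k)|A(h)|^{1/2^\star}\le \|(u-k)^+\|_{L^{2^\star}(\Omega)}$, which gives $|A(h)|\le C\,\|z\|_{L^p(\Omega)}^{2^\star}(h-k)^{-2^\star}|A(k)|^{\gamma 2^\star}$ with $\gamma 2^\star>1$. Stampacchia's lemma then provides $M>0$, depending only on $N,s,p,\Omega$ and linearly on $\|z\|_{L^p(\Omega)}$, such that $|A(M)|=0$; repeating the argument for $-u$ yields \eqref{inf-norm}.

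The borderline and supercritical cases require minor adaptations. If $N=2s$, I would replace $L^{2^\star}$ by $L^q$ for a suitably chosen $q\in(1,\infty)$ (using the second line of \eqref{inj1}) and choose $q$ so that $1/p+1/q<1$, which is possible precisely because $p>1$; the Stampacchia iteration then proceeds identically. If $N<2s$, the embedding $\widetilde{W}_0^{s,2}(\Omega)\hookrightarrow C^{0,s-N/2}(\overline\Omega)\subset L^\infty(\Omega)$ means that $L^1(\Omega)\hookrightarrow\widetilde{W}^{-s,2}(\Omega)$, so the weak solution $u$ lies in $\widetilde{W}_0^{s,2}(\Omega)\hookrightarrow L^\infty(\Omega)$ and \eqref{inf-norm} follows directly from the continuity of the solution map.

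The main technical obstacle is the first step: verifying that $(u-k)^+\in\widetilde{W}_0^{s,2}(\Omega)$ and the pointwise bilinear inequality above hold for the genuinely nonlocal form, and then tracking the nonlocal integral over $\mathbb{R}^N\times\mathbb{R}^N$ rather than $\Omega\times\Omega$ (the extra contribution $\int_\Omega |(u-k)^+|^2\kappa(x)\,dx$ in \eqref{eq:eqnorm} is nonnegative and only helps coercivity, but one must handle it correctly). Once this is in place, the iteration is standard. A secondary nuisance is keeping the constant in \eqref{inf-norm} explicit enough to confirm its linear dependence on $\|z\|_{L^p(\Omega)}$, which follows by homogeneity after noting that $u$ depends linearly on $z$.
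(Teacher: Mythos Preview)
The paper does not actually prove this proposition: it is stated without proof and attributed to the authors' earlier work \cite{antil2017optimal} (see the sentence preceding Definition~\ref{def:weak_d} and the introduction). So there is no ``paper's own proof'' to compare against for this particular statement.

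That said, your Stampacchia truncation argument is correct and is exactly the method used in \cite{antil2017optimal}; in fact the present paper re-uses the same scheme in the proof of Theorem~\ref{Main1} (the generalization with right-hand side in dual form), citing \cite[Lemma~2.7]{War} for the fact that the truncation stays in $\widetilde{W}_0^{s,2}(\Omega)$ and \cite[Proposition~3.10 and Section~3.3]{antil2017optimal} for the inequality $\mathcal E(u_k,u_k)\le \mathcal E(u_k,u)$. The only cosmetic difference is that the paper works with the two-sided truncation $u_k=(|u|-k)^+\operatorname{sgn}(u)$ rather than handling $(u-k)^+$ and $(-u-k)^+$ separately; this avoids repeating the argument for $-u$. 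Your treatment of the three cases in \eqref{cond-p} via the three embeddings in \eqref{inj1}, and your observation that the $\kappa$-term in \eqref{eq:eqnorm} only helps, are both on target. One small caveat: your reference to \eqref{eq:blah} is only valid for $s>\tfrac12$, but since you subsequently invoke \eqref{eq:eqnorm} and note that the extra term is nonnegative, the argument goes through for all $0<s<1$ without that shortcut.
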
   
In Theorem~\ref{Main2}, we shall reduce the $L^p(\Om)$ regularity requirement on the datum $z$ given  in Proposition \ref{prop:Antil_Warma}.
    \begin{proposition}
    \label{solbound}
        Let $\Omega$ be a bounded Lipschitz domain satisfying the exterior cone condition.
        Assume that $z\in L^{\infty} (\Omega)$. Then every weak solution $u$ of \eqref{eq:Sd} 
        belongs to $C^s(\RR^N)$ and there is a constant $C=C(N,s,p,\Omega)>0$ such that  
        \begin{equation}
        \label{Cs-norm}
            \| u\|_{ C^{s}(\RR^N)} \leq C \| z\|_{L^{\infty}(\Omega)} . 
        \end{equation}
    \end{proposition}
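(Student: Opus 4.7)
The plan is to combine three ingredients: a global $L^\infty$ bound (already available), an interior Hölder estimate for $(-\Delta)^s$, and a boundary decay estimate obtained through a barrier constructed using the exterior cone condition. A dyadic/rescaling argument then patches these together into a global $C^s$ bound. Throughout, by linearity we may normalize so that $\|z\|_{L^\infty(\Omega)}\le 1$.

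First, Proposition~\ref{prop:Antil_Warma} (applied with $p=\infty$, which is permissible since $\Omega$ is bounded) delivers $\|u\|_{L^\infty(\Omega)}\le C$, hence $\|u\|_{L^\infty(\RR^N)}\le C$ after extension by zero. Next, for any ball $B_{2r}(x_0)\subset\Omega$, I would invoke the interior Hölder regularity theory for the fractional Laplacian (e.g.\ in the spirit of Silvestre's interior Schauder estimates for $(-\Delta)^s$ with bounded right-hand side) which, after rescaling, yields
\begin{equation*}
[u]_{C^{s}(B_{r/2}(x_0))} \le C\,r^{-s}\|u\|_{L^\infty(\RR^N)} + C\,r^{s}\|z\|_{L^\infty(\Omega)}.
\end{equation*}

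The heart of the argument, and the main obstacle, is the boundary estimate
\begin{equation*}
|u(x)| \le C\,\|z\|_{L^\infty(\Omega)}\,\mathrm{dist}(x,\partial\Omega)^{s}, \qquad x\in\Omega.
\end{equation*}
To obtain this I would construct a barrier $\varphi\in C(\RR^N)$ at each boundary point $x_0\in\partial\Omega$ satisfying $\varphi\ge 0$ on $\RR^N$, $\varphi=0$ in the exterior cone anchored at $x_0$, $(-\Delta)^s\varphi\ge 1$ in a small one-sided neighborhood $\Omega\cap B_\rho(x_0)$, and $\varphi(x)\le C|x-x_0|^s$ nearby. The standard building block is the $s$-harmonic extension of a half-space indicator, whose homogeneous $(x_N)_+^s$-type structure gives both the correct boundary decay and a positive fractional Laplacian; under the exterior cone condition one can localize this barrier by placing it at $x_0$ and scaling. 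Applying the comparison principle to $\pm u$ against $C\varphi$ (using $u=0$ on $\RR^N\setminus\Omega$) then gives the claimed boundary decay. This barrier construction is the delicate step: one must verify that $(-\Delta)^s$ of the proposed barrier is bounded below on the relevant region uniformly in $x_0\in\partial\Omega$, which is where the exterior cone hypothesis is essential.

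Finally, to upgrade these two estimates to a global $C^s$ bound, I would fix $x,y\in\RR^N$ and set $\delta_x=\mathrm{dist}(x,\partial\Omega)$, $\delta_y=\mathrm{dist}(y,\partial\Omega)$ with $\delta_x\le\delta_y$ (the cases where $x$ or $y$ lies in $\RR^N\setminus\Omega$ reduce to the same analysis since $u\equiv 0$ there). If $|x-y|\ge\tfrac12\delta_x$, then the boundary decay estimate and the triangle inequality $\delta_y\le\delta_x+|x-y|$ give
\begin{equation*}
|u(x)-u(y)|\le |u(x)|+|u(y)| \le C\bigl(\delta_x^s+\delta_y^s\bigr)\le C|x-y|^s.
\end{equation*}
If instead $|x-y|<\tfrac12\delta_x$, both points lie in $B_{r/2}(x)$ with $r=\delta_x$, and the interior estimate combined with the $L^\infty$ bound yields $|u(x)-u(y)|\le C\,r^{-s}\cdot |x-y|^s\cdot\|u\|_{L^\infty}+Cr^s|x-y|^s$; since $r\le\mathrm{diam}(\Omega)$, this is bounded by $C|x-y|^s$ as well. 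Combining with the $L^\infty$ bound produces \eqref{Cs-norm}. This is essentially the scheme of \cite[Proposition 1.1]{XRosOton_JSerra_2014a}.
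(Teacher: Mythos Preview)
The paper does not prove this proposition at all: it is explicitly \emph{recalled} from \cite[Proposition~1.1]{XRosOton_JSerra_2014a} (see the sentence immediately preceding Proposition~\ref{prop:Antil_Warma}), so there is no ``paper's own proof'' to compare with. Your sketch is precisely the Ros-Oton--Serra scheme you cite at the end, so in that sense you are aligned with what the paper invokes.

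One small gap worth flagging in your final patching step: in the case $|x-y|<\tfrac12\delta_x$ you feed the \emph{global} bound $\|u\|_{L^\infty(\RR^N)}\le C$ into the interior estimate, which leaves you with a term of order $r^{-s}|x-y|^s$ that does not collapse to $C|x-y|^s$ when $r$ is small. The remedy is to use the boundary decay you just proved: every point of $B_r(x)$ with $r=\delta_x$ lies within distance $2r$ of $\partial\Omega$, so $\|u\|_{L^\infty(B_r(x))}\le Cr^s$, and then $r^{-s}\cdot Cr^s=C$ closes the estimate. (Alternatively, one uses an interior estimate with a localized norm or tail term.) With that correction your outline is the standard one.
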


After giving the above two results, we are ready to state the first main result of this section. 

    \begin{theorem} 
    \label{pbound}
        Let $\Omega$ be a bounded Lipschitz domain satisfying the exterior cone condition. 
        Assume that $z\in L^p(\Omega)$ with $p$ as in \eqref{cond-p}. Then every weak solution 
        $u$ of \eqref{eq:Sd} belongs to $C_0(\Om)$ and there is a constant $C=C(N,s,p,\Omega)>0$ 
        such that 
        \begin{equation}
             \| u\|_{ C_0(\Om)} \leq C \| z\|_{L^{p}(\Omega)} . 
        \end{equation}
    \end{theorem}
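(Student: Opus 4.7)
The plan is to obtain the theorem by a density argument that combines Propositions~\ref{prop:Antil_Warma} and~\ref{solbound}. The idea is that Proposition~\ref{solbound} already provides $C^s$-regularity (hence continuity up to the boundary) for $L^\infty$ data, while Proposition~\ref{prop:Antil_Warma} gives the uniform estimate $\|u\|_{L^\infty(\Omega)}\le C\|z\|_{L^p(\Omega)}$. Approximating $z\in L^p(\Omega)$ by $L^\infty$ data and using linearity, the $L^\infty$-bound will force the corresponding solutions to form a Cauchy sequence in the sup-norm, whose uniform limit must then live in $C_0(\Omega)$.

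More precisely, I would first fix $z\in L^p(\Omega)$ with $p$ as in \eqref{cond-p} and choose a sequence $\{z_n\}\subset L^\infty(\Omega)$ with $z_n\to z$ in $L^p(\Omega)$ (for instance, by truncation $z_n:=\max(\min(z,n),-n)$; in the case $p=1$, i.e.\ $N<2s$, truncation still converges in $L^1$ by dominated convergence). Since $L^p(\Omega)\hookrightarrow\widetilde{W}^{-s,2}(\Omega)$ by Remark~\ref{remark}, the weak solutions $u$ and $u_n$ to \eqref{eq:Sd} with data $z$ and $z_n$ respectively are well defined in $\widetilde{W}_0^{s,2}(\Omega)$ by Lax--Milgram. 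Proposition~\ref{solbound} yields $u_n\in C^s(\RR^N)$; since $u_n\equiv 0$ on $\RR^N\setminus\Omega$, we may identify $u_n\in C_0(\Omega)$.

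Next I would exploit linearity: for every $n,m$, the function $u_n-u_m\in\widetilde{W}_0^{s,2}(\Omega)$ is the weak solution to \eqref{eq:Sd} with datum $z_n-z_m\in L^p(\Omega)$. Applying the $L^\infty$-bound \eqref{inf-norm} of Proposition~\ref{prop:Antil_Warma} then gives
\begin{equation*}
    \|u_n-u_m\|_{L^\infty(\Omega)}\le C\|z_n-z_m\|_{L^p(\Omega)},
\end{equation*}
so $\{u_n\}$ is Cauchy in $L^\infty(\Omega)$ and converges uniformly on $\RR^N$ to some $v\in C_0(\Omega)$. On the other hand, continuity of the solution map $\widetilde{W}^{-s,2}(\Omega)\to\widetilde{W}_0^{s,2}(\Omega)$ together with the continuous embedding $L^p(\Omega)\hookrightarrow\widetilde{W}^{-s,2}(\Omega)$ gives $u_n\to u$ in $\widetilde{W}_0^{s,2}(\Omega)$, hence in $L^2(\Omega)$ and (after extracting a subsequence) a.e.\ in $\Omega$. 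Uniqueness of the pointwise a.e.\ limit then forces $u=v$ a.e., and since $v$ is continuous we conclude $u\in C_0(\Omega)$. The quantitative bound follows by passing to the limit in $\|u_n\|_{C_0(\Omega)}=\|u_n\|_{L^\infty(\Omega)}\le C\|z_n\|_{L^p(\Omega)}$.

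The main potential obstacle is not conceptual but bookkeeping: one must be careful that the $C^s$-estimate of Proposition~\ref{solbound} applies (the hypothesis on the exterior cone condition is already assumed) and that the extensions by zero sit properly in $C_0(\Omega)$ in the identification described after \eqref{eq:Ud}. In the degenerate case $N<2s$ (so $N=1$, $p=1$), continuity can alternatively be obtained from the Sobolev embedding $\widetilde{W}_0^{s,2}(\Omega)\hookrightarrow C^{0,s-N/2}(\overline\Omega)$ in \eqref{inj1}, giving an independent verification of the estimate.
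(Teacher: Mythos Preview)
Your proof is correct and follows essentially the same density argument as the paper. The only cosmetic difference is that the paper applies Proposition~\ref{prop:Antil_Warma} directly to $u_n-u$ (rather than to $u_n-u_m$), obtaining $\|u_n-u\|_{L^\infty(\Omega)}\to 0$ without the separate identification step; otherwise the arguments are identical.
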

    
    \begin{proof}
    Since $L^{\infty}(\Omega)$ is dense in $L^p(\Omega)$, therefore given $z \in L^p(\Om)$,
    we can construct a sequence $\{ z_n \}_{n \in \mathbb{N}} \subset L^{\infty}(\Omega)$ such 
    that     
    \[ 
        \|z_n - z\|_{L^p(\Omega)}\rightarrow 0 \;\;\mbox{ as }\; n\to\infty. 
    \]    

    For each $n \in \mathbb{N}$, let $u_n$ solve
    \begin{align}\label{eq:Sddd}
        \begin{cases}
        (-\Delta)^s u_n &= z_n \quad \mbox{in } \Om, \\
                    u_n &= 0 \quad \mbox{in } \RR^N\setminus \Om  .
        \end{cases}
    \end{align}
    Then from Proposition~\ref{solbound}, we have that $u_n \in C^s(\RR^N)$. 
    Next, subtracting \eqref{eq:Sd} from \eqref{eq:Sddd}, we deduce that 
    \begin{equation*}
        \begin{cases}
        (-\Delta)^s (u_n-u) &= z_n-z \quad \mbox{in } \Om, \\
                    (u_n-u) &= 0 \quad \mbox{in } \RR^N\setminus \Om  . 
        \end{cases}
    \end{equation*}    
    Since $(z_n - z) \in L^p(\Om)$, we can apply Proposition~\ref{prop:Antil_Warma}
    to deduce that 
    \[
        \| u_n-u \|_{L^{\infty}(\Omega)} \leq C \| z_n - z \|_{L^p(\Omega)} 
        \rightarrow 0 \quad \mbox{as } n \rightarrow \infty . 
    \]
    Thus 
    \[
        \| u_n-u \|_{L^{\infty}(\Omega)} \rightarrow 0 \quad \mbox{as } n \rightarrow \infty . 
    \]
    Since $u_n \in C_0(\Om)$, it follows that $u \in C_0(\Om)$ and the proof is complete. 
    \end{proof}
We shall reduce the $L^p(\Om)$ regularity requirement on the datum $z$ in the above result in Corollary~\ref{cor:Main2}. 

Towards this end, we introduce the notion of very-weak solution to \eqref{eq:Sda} with measure
valued right-hand-side datum. We refer to \cite{HAntil_RKhatri_MWarma_2018a,antil2019external} for the notion of 
very-weak solution with $L^2(\Omc)$ exterior datum. 

 \begin{definition}[\bf Very-weak solution to the Dirichlet problem with measure datum] 
 \label{def:vweak_d}
 Let $p$ be as in \eqref{cond-p} and $\frac{1}{p}+\frac{1}{p'} = 1$. 
    Let $\mu \in \mathcal{M}(\Omega)$.
    A function $u \in L^{p'}(\Om)$ is said to be a very-weak solution 
    to \eqref{eq:Sda} if the identity
    \begin{equation*}
      \int_{\Om} 
          u(-\Delta)^s v\;dx
         = \int_\Om v \; d\mu ,
    \end{equation*}
   holds for every $v \in V := \{v \in C_0(\Om) \cap \widetilde{W}_0^{s,2}(\Om) \;:\; (-\Delta)^s v \in L^p(\Om) \}$. 
 \end{definition}

In the next theorem we present the second main result of this section. 


    \begin{theorem}\label{thm:mesdata}
        Let $\Omega$ be a bounded Lipschitz domain satisfying the exterior cone condition. 
        Let $\mu \in \mathcal{M}(\Om)$ and $p$ as in \eqref{cond-p}.  Then there exists a unique $u \in L^{p'}(\Omega)$,
        with $p'$ such that $\frac{1}{p} + \frac{1}{p'} = 1$, that solves \eqref{eq:Sda} 
        according to the Definition~\ref{def:vweak_d} and there is a constant $C=C(N,s,p,\Omega)>0$ 
        such that 
        \[
            \|u\|_{L^{p'}(\Om)} \le C \|\mu\|_{\mathcal{M}(\Om)} . 
        \]
    \end{theorem}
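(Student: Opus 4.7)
The proof will follow the classical transposition (very-weak) method of Lions--Magenes, adapted to the fractional setting. The idea is to define the putative solution as the Riesz representative of a bounded linear functional on $L^p(\Omega)$, where the functional is built from solutions to the adjoint Dirichlet problem. The continuity estimate of Theorem~\ref{pbound} will furnish exactly the boundedness required to close the duality argument, and uniqueness will come from the surjectivity of $(-\Delta)^s$ onto $L^p(\Omega)$ when acting on the test space $V$.

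For existence, the plan is as follows. Given any $\phi\in L^p(\Omega)$, first solve the auxiliary (self-adjoint) Dirichlet problem
\[
 (-\Delta)^s v_\phi = \phi\;\text{ in }\Omega,\qquad v_\phi=0\;\text{ in }\Omc.
\]
By Remark~\ref{remark}(2), we have $L^p(\Omega)\hookrightarrow \widetilde{W}^{-s,2}(\Omega)$, so Lax--Milgram produces a unique weak solution $v_\phi\in \widetilde{W}_0^{s,2}(\Omega)$. Theorem~\ref{pbound} then upgrades this to $v_\phi\in C_0(\Omega)$ with
\[
\|v_\phi\|_{C_0(\Omega)}\le C\,\|\phi\|_{L^p(\Omega)},
\]
and since $(-\Delta)^s v_\phi=\phi\in L^p(\Omega)$, we have $v_\phi\in V$. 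I then define the linear functional
\[
T(\phi):=\int_\Omega v_\phi\,d\mu,\qquad \phi\in L^p(\Omega).
\]
The estimate $|T(\phi)|\le \|\mu\|_{\mathcal{M}(\Omega)}\|v_\phi\|_{C_0(\Omega)}\le C\|\mu\|_{\mathcal{M}(\Omega)}\|\phi\|_{L^p(\Omega)}$ shows that $T\in (L^p(\Omega))^\star$, and the Riesz representation theorem produces $u\in L^{p'}(\Omega)$ satisfying $T(\phi)=\int_\Omega u\phi\,dx$ with $\|u\|_{L^{p'}(\Omega)}\le C\|\mu\|_{\mathcal{M}(\Omega)}$, which is the desired estimate.

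It remains to identify $u$ as a very-weak solution according to Definition~\ref{def:vweak_d} and to establish uniqueness. The identity $\int_\Omega u\,\phi\,dx=\int_\Omega v_\phi\,d\mu$ reads, upon substituting $\phi=(-\Delta)^s v_\phi$, as
\[
\int_\Omega u\,(-\Delta)^s v_\phi\,dx=\int_\Omega v_\phi\,d\mu,
\]
which is exactly the very-weak formulation for test functions of the form $v_\phi$. To extend to an arbitrary $v\in V$, I set $\phi:=(-\Delta)^s v\in L^p(\Omega)$; then $v$ itself is a weak solution to the auxiliary problem with datum $\phi$, so by uniqueness of the weak solution $v=v_\phi$, and the identity passes to every $v\in V$. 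For uniqueness, if $u_1,u_2\in L^{p'}(\Omega)$ both solve \eqref{eq:Sda} in the very-weak sense, then $\int_\Omega (u_1-u_2)(-\Delta)^s v\,dx=0$ for every $v\in V$. Since $(-\Delta)^s:V\to L^p(\Omega)$ is surjective (its range contains every $\phi\in L^p(\Omega)$, realized as $\phi=(-\Delta)^s v_\phi$), we conclude $\int_\Omega (u_1-u_2)\phi\,dx=0$ for all $\phi\in L^p(\Omega)$, hence $u_1=u_2$ in $L^{p'}(\Omega)$.

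The main obstacle is purely analytic and has already been overcome upstream: namely, the passage from the integrable datum $\phi\in L^p(\Omega)$ to a test function $v_\phi\in C_0(\Omega)$ with a quantitative bound. This is precisely the content of Theorem~\ref{pbound}, combined with the embedding $L^p(\Omega)\hookrightarrow\widetilde{W}^{-s,2}(\Omega)$ from Remark~\ref{remark}(2). Without the $C_0(\Omega)$-continuity up to the boundary, the pairing $\int_\Omega v_\phi\,d\mu$ would not be well defined against a general Radon measure, and the transposition estimate would collapse; with it, the remainder of the argument is the standard Lions--Magenes scheme.
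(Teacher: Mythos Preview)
Your proof is correct and follows essentially the same transposition argument as the paper: both define the solution operator $\Xi:L^p(\Omega)\to C_0(\Omega)$, $\phi\mapsto v_\phi$, via Theorem~\ref{pbound}, and then set $u=\Xi^*\mu\in L^{p'}(\Omega)$ (the paper writes this as an adjoint, you write it via Riesz representation of the functional $T(\phi)=\int_\Omega v_\phi\,d\mu$, which is the same thing). Your treatment is in fact slightly more complete than the paper's, since you spell out why the very-weak identity extends from the particular $v_\phi$ to every $v\in V$ and why uniqueness follows from the surjectivity of $(-\Delta)^s:V\to L^p(\Omega)$, points the paper leaves implicit.
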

    
    \begin{proof}
        For a given $\xi \in L^p(\Om)$ we begin by considering the following auxiliary problem
        \begin{equation}\label{eqv}
            \begin{cases}
            (-\Delta)^s v &= \xi \quad \mbox{in } \Om, \\
                        v &= 0 \quad \mbox{in } \RR^N\setminus \Om  .
            \end{cases}
        \end{equation}
        Since $L^p(\Om) \hookrightarrow L^{p'}(\Om) \hookrightarrow\widetilde{W}^{-s,2}(\Om)$ (by using Remark \ref{remark}), with the embedding being
        continuous, it follows that there exists  a unique $v \in \widetilde{W}_0^{s,2}(\Om)$ satisfying \eqref{eqv}.         
        Then according to Theorem~\ref{pbound}, we have that $v \in C_0(\Om)$.       
        Towards this end we define the mapping 
        \[
            \begin{aligned}
                \Xi : L^p(\Om) &\rightarrow C_0(\Om) \\
                       \xi &\mapsto \Xi \xi := v .  
            \end{aligned}
        \]    
        Notice that $\Xi$ is linear and continuous (due to Theorem~\ref{pbound}). 
        
        Let us define $u := \Xi^* \mu$. Then $u\in L^{p'}(\Om)$. We shall show that $u$ solves \eqref{eq:Sda}.         
        Notice that
        \begin{equation}\label{eq:p'soln}
            \int_\Om u\xi \;dx = \int_\Om u (-\Delta)^s v \; dx 
            = \int_\Om (\Xi^*\mu) \xi \; dx = \int_\Om v \;d\mu .
        \end{equation}
        Thus, we have constructed a unique function $u$ that solves \eqref{eq:Sda} according to 
        the Definition~\ref{def:vweak_d}. It then remains to prove the required bound. From 
        \eqref{eq:p'soln} we have that 
        \begin{equation}\label{eq:imp}
            \left| \int_\Om u\xi \;dx \right| 
                \le \|\mu\|_{\mathcal{M}(\Om)} \|v\|_{C_0(\Om)}
                \le C \|\mu\|_{\mathcal{M}(\Om)} \|\xi\|_{L^p(\Om)},
        \end{equation}
        where in the last step we have used Theorem~\ref{pbound}. Then dividing both sides 
        by $\|\xi\|_{L^p(\Om)}$ and taking the supremum over $\xi \in L^{p}(\Om)$ we obtain the desired result. The proof is finished.
    \end{proof}
The regularity of $u$, given in Theorem~\ref{thm:mesdata} , and solving \eqref{eq:Sda} will be improved in Corollary~\ref{cor:Main3}.

\section{Optimal control problem} \label{eq:ocp}

Throughout this section, we will operate under the conditions of Theorem~\ref{thm:mesdata}.    

The purpose of this section is to study the existence of solution to the optimal control problem \eqref{eq:dcp} and establish the first order optimality conditions. 

We begin by rewriting the optimal control problem \eqref{eq:dcp}. 
We recall from \eqref{eq:DeltasD} that $(-\Delta)^s_D$ is the realization in $L^2(\Om)$
of the fractional Laplacian $(-\Delta)^s$ which incorporates zero exterior Dirichlet
condition and it is a self-adjoint operator on $L^2(\Om)$. 
As a result, the problem \eqref{eq:dcp} can be rewritten as
	\begin{equation} 
	\begin{aligned}
		&\min_{(u,z)\in (U,Z)} J(u,z) \\ 
		\text{subject to}& \\
		& (-\Delta)_D^s u = z , \quad \mbox{in } \Om \\
		&u|_{\Om} \in \mathcal{K} \quad \mbox{and} \quad 
		z \in Z_{ad}  .
	\end{aligned}	
	\end{equation}

Next, we introduce relevant function spaces. We let 
    \begin{equation*}
    \begin{aligned}
        Z &:= L^p(\Om) ,  \quad \mbox{with $p$ as in \eqref{cond-p} but } 1 < p < \infty , \\
        U &:= 
            \{u\in \widetilde{W}_0^{s,2}(\Om)\cap C_0(\Om): (-\Delta)^s_D (u|_\Om) \in L^{p}(\Om)\}.
    \end{aligned}    
    \end{equation*}
Then $U$ is a Banach space with the graph norm 
$\|u\|_{U}:=\|u\|_{\widetilde{W}_0^{s,2}(\Om)} + \|u\|_{C_0(\Om)} 
+ \|(-\Delta)^s_D (u|_\Om)\|_{L^p(\Om)}$. 
Notice that $(-\Delta)^s_D (u|_\Om) = (-\Delta)^s u$ in $\Om$. 
We let $Z_{ad} \subset Z$ be a nonempty, closed, and convex set and 
$\mathcal{K}$ as in \eqref{eq:Ud}, i.e., 
    \begin{equation}\label{eq:Ud_1}
        u|_{\Om} \in \mathcal{K} := \left\{ w \in C_0(\Omega) \ : \ w(x) \le u_b(x) , 
            \quad \forall x \in \overline{\Om}  \right\}. 
    \end{equation}

Notice that for every $z \in Z$, due to Theorem~\ref{pbound}, there is a 
unique $u \in U$ that solves the state equation \eqref{eq:Sd}. Using this fact, 
the control-to-state (solution) map
    \[
        S : Z \rightarrow U,\;\;\;z \mapsto Sz =: u 
    \]       
is well-defined, linear, and continuous. Since $U$ is continuously embedded into
$C_0(\Om)$, then we can consider the control-to-state map as 
    \[
        E \circ S : Z \rightarrow  C_0(\Om) .
    \]

Towards this end, we define the admissible control set as
    \[
        \widehat{Z}_{ad} := \left\{ z \in Z \; : \; 
            z \in Z_{ad}, \ (E \circ S) z \in \mathcal{K} \right\} ,
    \]
and as a result, the reduced minimization problem is given by 
     \begin{equation}\label{eq:rpDir}
        \min_{z \in \widehat{Z}_{ad}} \mathcal{J}(z) := J( (E \circ S) z,z) . 
    \end{equation}

%
Next, we state the well-posedness result for \eqref{eq:dcp} and equivalently 
\eqref{eq:rpDir}. 
\begin{theorem}\label{thm:docexist}
Let $Z_{ad}$ be a bounded, closed, and convex subset of $Z$ and $\mathcal{K}$ be a convex and
closed subset of $C_0(\Om)$ such that $\widehat{Z}_{ad}$ is nonempty. If 
$J : L^2(\Om) \times L^p(\Om)$ is weakly lower-semicontinuous, then there is a 
solution to \eqref{eq:rpDir}. 
\end{theorem}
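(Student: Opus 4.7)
The plan is to apply the direct method of the calculus of variations to the reduced problem \eqref{eq:rpDir}. Since $\widehat{Z}_{ad}$ is nonempty and (under the implicit convention that $\mathcal{J}$ is bounded below) the infimum $m := \inf_{z \in \widehat{Z}_{ad}} \mathcal{J}(z)$ is finite, I would choose a minimizing sequence $\{z_n\} \subset \widehat{Z}_{ad}$ with $\mathcal{J}(z_n) \to m$. As $Z = L^p(\Om)$ is reflexive (recall $1 < p < \infty$) and $Z_{ad}$ is bounded, a subsequence, still denoted $\{z_n\}$, satisfies $z_n \rightharpoonup z^*$ in $L^p(\Om)$. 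Since $Z_{ad}$ is convex and norm-closed, Mazur's theorem gives weak closedness of $Z_{ad}$, so $z^* \in Z_{ad}$.

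The central step is to pass to the limit in the pointwise state constraint. By Theorem~\ref{pbound}, together with the continuous embedding $L^p(\Om) \hookrightarrow \widetilde{W}^{-s,2}(\Om)$ from Remark~\ref{remark}, the control-to-state map $S : L^p(\Om) \to U$ is linear and bounded, and $U \hookrightarrow C_0(\Om)$ continuously by the very definition of the graph norm on $U$. Consequently $E \circ S : L^p(\Om) \to C_0(\Om)$ is a bounded linear operator, hence weak-to-weak continuous, which yields $u_n := (E\circ S)z_n \rightharpoonup u^* := (E\circ S)z^*$ weakly in $C_0(\Om)$. Because $\mathcal{K}$ is convex and norm-closed in $C_0(\Om)$, a further application of Mazur's theorem gives that $\mathcal{K}$ is weakly closed, so $u^* \in \mathcal{K}$; combined with $z^* \in Z_{ad}$ this yields the feasibility $z^* \in \widehat{Z}_{ad}$.

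It remains to invoke lower semicontinuity. Arguing similarly, $S : L^p(\Om) \to L^2(\Om)$ is also linear continuous via $U \hookrightarrow L^\infty(\Om) \hookrightarrow L^2(\Om)$ on the bounded domain $\Om$, so $u_n \rightharpoonup u^*$ weakly in $L^2(\Om)$. Therefore $(u_n, z_n) \rightharpoonup (u^*, z^*)$ weakly in $L^2(\Om) \times L^p(\Om)$, and the assumed weak lower semicontinuity of $J$ gives $\mathcal{J}(z^*) \le \liminf_n \mathcal{J}(z_n) = m$, so $z^*$ attains the infimum. The only step with nontrivial content is the weak continuity of $S$ into $C_0(\Om)$, which is precisely the payoff of Theorem~\ref{pbound}; everything else is a textbook direct-method argument via Mazur's theorem, and I anticipate no further obstacle.
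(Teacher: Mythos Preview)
Your proposal is correct and follows essentially the same direct-method argument as the paper: minimizing sequence, weak compactness of $Z_{ad}$ in the reflexive space $L^p(\Om)$, Mazur's theorem for both $Z_{ad}$ and $\mathcal{K}$, and weak lower semicontinuity of $J$. The only cosmetic difference is that the paper phrases the convergence of $u_n$ in $C_0(\Om)$ as weak$^\star$, whereas you (more cleanly) use that the bounded linear map $E\circ S$ is weak-to-weak continuous; either way the conclusion $u^\ast\in\mathcal{K}$ follows.
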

\begin{proof}
The proof is based on the so-called direct method or the Weierstrass theorem 
\cite[Theorem~3.2.1]{HAttouch_GButtazzo_GMichaille_2014a}. We will provide
some details for completeness. We can always construct a minimizing sequence 
$\{z_n\}_{n=1}^\infty \subset Z$ such that 
$\inf_{z \in Z_{ad}} \mathcal{J}(z) = \lim_{n\rightarrow\infty} \mathcal{J}(z_n)$. 
Since $Z_{ad}$ is bounded, it follows that $\{z_n\}_{n=1}^\infty $  is a
bounded sequence. Due to the reflexivity of $Z$, there exists a weakly convergent subsequence
$\{z_n\}_{n=1}^\infty$ (not relabeled) such that $z_n \rightharpoonup \bar{z}$  in $Z$ as
$n\rightarrow\infty$. Next, due to $Z_{ad}$ being closed and 
convex, thus weakly closed, we obtain that $\bar{z} \in Z_{ad}$. 

Notice that $C_0(\Om)$ is non-reflexive. However, we have that 
$u_n = S z_n \in U \hookrightarrow C_0(\Om)$ and $S \in \mathcal{L}(Z,C_0(\Om))$, therefore 
we have a subsequence $\{u_n\}$ (not-relabeled) that converges weakly$^\star$ to $\bar{u}$ in $C_0(\Om)$. 
Since $\mathcal{K}$ is also weakly closed, we have that $\bar{u} \in \mathcal{K}$. 

Owing to the uniqueness of the limit and the assumption that $\widehat{Z}_{ad}$ is nonempty
we can deduce that $\bar{z} \in \widehat{Z}_{ad}$. Finally, it remains to show that $\bar{z}$
is a solution to \eqref{eq:rpDir}. This follows from the weak lower-semicontinuity
assumption on $J$. 
%
\end{proof}

Next, we derive the first order necessary optimality conditions, but before 
we make the following standard assumption.

    \begin{assumption}[\bf Slater condition]\label{ass:data_compatibility}
        There is some control function $\widehat{z} \in Z_{ad}$ 
        such that the corresponding state $u$ fulfills the strict state constraint 
    \begin{equation}\label{eq:Robinson}
        u(x) < u_b(x) \quad \forall x \in \overline\Omega . 
    \end{equation}
    \end{assumption}   
See the monographs \cite{MHinze_RPinnau_MUlbrich_SUlbrich_2009a, FTroeltzsch_2010a} for a further discussion.    


Using the definition of $U$ we have that $(-\Delta)^s_D : U \mapsto Z$ is a bounded
operator and from Theorem~\ref{pbound} it is a surjective operator. 
We have the following
first order necessary optimality conditions:
%
\begin{theorem}\label{thm:optcond}
    Let $J : L^2(\Om) \times L^p(\Om) \rightarrow \mathbb{R}$ be continuously Fr\'echet differentiable 
    and assume that \eqref{eq:Robinson} holds. Let $(\bar{u},\bar{z})$ be a solution to the optimization
    problem \eqref{eq:dcp}. Then there exist Lagrange multipliers 
    $\bar{\mu} \in (C_0(\Om))^\star$ and an adjoint variable $\bar{\xi} \in L^{p'}(\Om)$ such that 
    \begin{subequations}
        \begin{align}
            (-\Delta)^s_D \bar{u} &= \bar{z} , \quad \mbox{in } \Om , \label{eq:a}  \\             
                  \langle \bar{\xi},(-\Delta)^s_D v \rangle_{L^{p'}(\Om),L^p(\Om)} 
                &= \left( J_u(\bar{u},\bar{z}),v \right)_{L^2(\Om)}  
                   + \int_\Om v \; d\bar{\mu}    
                   , && \forall \;v \in U \label{eq:b} \\
                \langle \bar{\xi} + J_z(\bar{u},\bar{z}) , 
                    z - \bar{z} \rangle_{L^{p'}(\Om),L^p(\Om)} 
                 &\ge 0 , && \forall \;z \in Z_{ad}  \label{eq:c}  \\
             \bar{\mu} \ge 0, \quad  \bar{u}(x) \le u_b(x) \mbox{ in } \Om, 
             \quad &\mbox{and} \quad \int_{\Om} (u_b - \bar{u})\;d\mu = 0 \label{eq:d} .      
        \end{align}
    \end{subequations}
\end{theorem}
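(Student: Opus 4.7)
The plan is to cast \eqref{eq:rpDir} as a constrained minimization over a closed convex set in a Banach space and apply a Karush--Kuhn--Tucker (KKT) theorem of Robinson/Zowe--Kurcyusz type. Define the constraint map $Gz := (E\circ S)z - u_b$, where the feasibility $u|_\Om\in\mathcal{K}$ becomes the cone condition $Gz\le 0$ in a suitable space of continuous functions, and note that $(E\circ S):Z\to C_0(\Om)$ is linear and continuous because of Theorem~\ref{pbound}. Assumption~\ref{ass:data_compatibility} supplies a Slater point $\widehat z\in Z_{ad}$ with $G\widehat z$ in the open negative cone, which is precisely the Robinson regularity condition. Since $J$ is $C^1$, the abstract KKT theorem then yields a nonnegative Radon measure $\bar{\mu}\in(C_0(\Om))^\star=\mathcal{M}(\Om)$ together with the stationarity relation
\[
\bigl\langle \mathcal{J}'(\bar z) + (E\circ S)^\star\bar{\mu},\ z-\bar z\bigr\rangle_{Z^\star,Z}\ge 0\quad\forall\, z\in Z_{ad},
\]
the primal feasibility $\bar u\le u_b$, and the complementarity $\int_\Om(u_b-\bar u)\,d\bar{\mu}=0$, which together give \eqref{eq:d}.

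Next I would identify the abstract quantity $S^\star J_u(\bar u,\bar z)+(E\circ S)^\star\bar{\mu}$ with the restriction to $\Om$ of an $L^{p'}$-adjoint state. Using the chain rule $\mathcal{J}'(\bar z)=S^\star J_u(\bar u,\bar z)+J_z(\bar u,\bar z)$, this reduces to defining $\bar\xi\in L^{p'}(\Om)$ as the unique very-weak solution, in the sense of Definition~\ref{def:vweak_d}, of the fractional adjoint Dirichlet problem
\[
(-\Delta)^s\bar\xi = J_u(\bar u,\bar z)+\bar{\mu}\quad\text{in }\Om,\qquad \bar\xi=0\quad\text{in }\Omc,
\]
whose right-hand side is the signed Radon measure $J_u(\bar u,\bar z)\,dx+\bar{\mu}$ (identifying $J_u(\bar u,\bar z)\in L^2(\Om)$ with an element of $\mathcal{M}(\Om)$ on the bounded set $\Om$). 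Theorem~\ref{thm:mesdata} delivers both existence/uniqueness and the estimate $\|\bar\xi\|_{L^{p'}(\Om)}\le C\bigl(\|J_u(\bar u,\bar z)\|_{L^2(\Om)}+\|\bar{\mu}\|_{\mathcal{M}(\Om)}\bigr)$. Testing Definition~\ref{def:vweak_d} against $v\in U$ produces exactly \eqref{eq:b}, and substituting this identity back into the stationarity relation above yields \eqref{eq:c}; \eqref{eq:a} holds by construction since $\bar u=S\bar z$.

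The principal obstacle is that the KKT multiplier $\bar{\mu}$ is only a Radon measure, so the adjoint equation admits no classical $\widetilde{W}_0^{s,2}(\Om)$-weak solution in the sense of Definition~\ref{def:weak_d}; the whole construction depends critically on the very-weak theory developed in Theorem~\ref{thm:mesdata}, which replaces the standard Riesz-based duality argument available in the local case $s=1$. A secondary technicality is pinning down the correct ambient space for the multiplier: since the cone of nonnegative functions inside $C_0(\Om)$ has \emph{empty} interior (its members vanish on $\pOm$), the Robinson regularity condition must first be verified in $C(\bOm)$, producing a multiplier in $\mathcal{M}(\bOm)$. One then exploits the Slater strict inequality at boundary points---which, combined with $\bar u\equiv 0$ on $\pOm$, forces $u_b-\bar u>0$ on $\pOm$---together with the complementarity slackness to conclude that this measure assigns zero mass to $\pOm$, so it is canonically identified with an element of $(C_0(\Om))^\star=\mathcal{M}(\Om)$ as stated. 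Once these two points are settled, the rewriting of the abstract KKT conditions into \eqref{eq:a}--\eqref{eq:d} is routine.
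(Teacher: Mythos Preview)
Your proposal is correct but follows a genuinely different route from the paper. The paper works in the \emph{full-space} formulation, treating both $u$ and $z$ as optimization variables with the PDE $(-\Delta)^s_D u = z$ as an explicit equality constraint, and invokes an abstract KKT result (Lemma~1.14 in \cite{MHinze_RPinnau_MUlbrich_SUlbrich_2009a}) that handles simultaneously the equality constraint, the set constraint $u|_\Om\in\mathcal{K}$, and the control constraint $z\in Z_{ad}$. In that framework the adjoint $\bar\xi\in L^{p'}(\Om)$ arises directly as the Lagrange multiplier attached to the equality constraint mapping $U\to Z=L^p(\Om)$, so Theorem~\ref{thm:mesdata} is never invoked inside the optimality proof. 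Your approach instead reduces to the control variable alone, applies a Zowe--Kurcyusz/Robinson argument to the inequality constraint, and then \emph{constructs} $\bar\xi$ a posteriori as the very-weak solution furnished by Theorem~\ref{thm:mesdata}; the identity $\bar\xi=(E\circ S)^\star(J_u(\bar u,\bar z)\,dx+\bar\mu)$ then recovers \eqref{eq:b}--\eqref{eq:c}. The paper's route is shorter and keeps the PDE theory and the optimization theory decoupled; your route is equally valid and has the merit of making explicit that the adjoint equation is precisely the measure-datum problem studied in Section~\ref{s:sa}.

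One remark on your ``secondary technicality'': the detour through $C(\bar\Om)$ and the subsequent argument that $\bar\mu$ carries no mass on $\partial\Om$ is correct but avoidable. The paper stays in $C_0(\Om)$ throughout by noting that the Slater condition forces $u_b>0$ on $\partial\Om$ (since the Slater state $u\in C_0(\Om)$ vanishes there while $u<u_b$), and hence the constraint set $\mathcal{K}=\{w\in C_0(\Om):w\le u_b\}$ itself has nonempty interior in $C_0(\Om)$---indeed the Slater state is an interior point. The empty-interior issue you flag pertains to the nonpositive \emph{cone} in $C_0(\Om)$, which is not the object the paper's constraint qualification uses.
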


\begin{proof}
We begin by checking the requirements for 
\cite[Lemma~1.14]{MHinze_RPinnau_MUlbrich_SUlbrich_2009a}. We notice that 
$(-\Delta)^s_D : U \mapsto Z$ is bounded and surjective. Moreover, the condition
\eqref{eq:Robinson} implies that the interior of the set $\mathcal{K}$, is 
nonempty. It then remains to show the existence of a $(\hat{u},\hat{z}) 
\in U \times Z_{ad}$ such that 
    \begin{equation}\label{eq:1}
        (-\Delta)^s_D (\hat{u}-\bar{u}) - (\hat{z}-\bar{z}) = 0  \;\;\mbox{ in }\;\Omega. 
    \end{equation}
Since $(\bar{u},\bar{z})$ solves the state equation, therefore from \eqref{eq:1}
we have that 
    \begin{equation}\label{eq:2}
        (-\Delta)^s_D \hat{u} = \hat{z} \;\;\mbox{ in }\;\Omega.
    \end{equation}
Notice that for every $\hat{z} \in Z_{ad}$, there is a unique $\hat{u}$ that
solves \eqref{eq:2}, in particular $(\hat{u},\hat{z})$ works. Thus we immediately
obtain \eqref{eq:a}--\eqref{eq:c}. Instead of \eqref{eq:d} we obtain that 
    \begin{equation}\label{eq:3}
        \bar{\mu} \in \mathcal{K}^\circ , \quad  u(x) \le u_b(x), \quad x \in \Om, 
        \quad \mbox{and} \quad
            \langle \bar{\mu},\bar{u}\rangle_{C_0(\Om)^*,C_0(\Om)} = 0 ,
    \end{equation}
where $\mathcal{K}^\circ$ denotes the polar cone. Then the equivalence between 
\eqref{eq:3} and \eqref{eq:d} follows from a classical result in functional 
analysis, see \cite[pp.~88]{MHinze_RPinnau_MUlbrich_SUlbrich_2009a} for details.
\end{proof}

\section{Characterization of the dual of fractional order Sobolev spaces}
\label{s:char}

Given $0<s<1$, $1\le p<\infty$ and $p'=\frac{p}{p-1}$, the aim of this section is to give a complete characterization of the space $\widetilde W^{-s,p'}(\Omega)$. Recall that $\widetilde W^{-s,p'}(\Omega)$ is the dual of the space $\widetilde W_0^{s,p}(\Omega)$. 

We start by stating this abstract result taken from \cite[pp.~194]{HWAlt_1992a}.

\begin{lemma} \label{cong}
If $X$ and $W$ are two Banach spaces, then $X\times W$ is also a Banach space with the associated norm 
    \[
        \| (x,y)\|_{X\times W} = \|x\|_{X} + \|y\|_{W}  . 
    \]     
Moreover, the dual of the product space, $(X\times W)^\star$ is isometrically isomorphic to the product of the dual spaces, that is, 
    \[ 
        (X\times W)^* \cong X^\star \times W^\star . 
    \] 
\end{lemma}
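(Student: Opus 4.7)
The lemma is standard, and my plan is to address the two claims in turn. For the Banach space structure of $X\times W$ with the sum norm $\|(x,y)\|_{X\times W}:=\|x\|_X+\|y\|_W$, the norm axioms (positive-definiteness, homogeneity, triangle inequality) follow coordinate-wise from those on $X$ and $W$. For completeness, given a Cauchy sequence $\{(x_n,y_n)\}$ in $X\times W$, the coordinate estimates $\|x_n-x_m\|_X\le \|(x_n,y_n)-(x_m,y_m)\|_{X\times W}$ and the analogous estimate in $W$ force $\{x_n\}$ and $\{y_n\}$ to be Cauchy in their respective (complete) spaces; their limits $x\in X$ and $y\in W$ yield $(x_n,y_n)\to (x,y)$ in the product norm.

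For the dual identification, I would introduce the map $\Phi:X^\star\times W^\star\to (X\times W)^\star$ defined by
\[
\Phi(f,g)(x,y):=f(x)+g(y).
\]
Linearity of $\Phi$ is clear, and boundedness is immediate from $|f(x)+g(y)|\le \|f\|_{X^\star}\|x\|_X+\|g\|_{W^\star}\|y\|_W$. For surjectivity, given $L\in (X\times W)^\star$, I would set $f(x):=L(x,0)$ and $g(y):=L(0,y)$; these functionals are linear and bounded (with norm controlled by $\|L\|$), and the linearity of $L$ yields $L(x,y)=L(x,0)+L(0,y)=\Phi(f,g)(x,y)$. Injectivity of $\Phi$ is immediate by testing against pairs of the form $(x,0)$ and $(0,y)$.

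The only delicate point is the isometric aspect. When $X\times W$ carries the sum norm, the matching norm on $X^\star\times W^\star$ making $\Phi$ a true isometry is the max norm $\|(f,g)\|:=\max(\|f\|_{X^\star},\|g\|_{W^\star})$; the upper bound for the dual norm of $\Phi(f,g)$ was already observed, while the matching lower bound is obtained by testing against normalized pairs $(x,0)$ and $(0,y)$ separately and taking the supremum. Since the lemma only asserts an isomorphism $\cong$, all equivalent norms on the finite product $X^\star\times W^\star$ suffice; the bookkeeping of matching norms is genuinely the only subtle step, and it is routine once one recognizes that supremum over the unit ball of a sum norm separates into independent suprema over the coordinate factors.
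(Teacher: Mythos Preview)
Your argument is correct and entirely standard. Note that the paper does not actually supply a proof of this lemma; it merely cites the result from Alt's textbook, so there is no ``paper's own proof'' to compare against. Your observation about the norm pairing is the right one: with the sum norm on $X\times W$, the induced dual norm on $X^\star\times W^\star$ is the max norm, and your verification of both inequalities is accurate. The paper itself is somewhat loose on this point (elsewhere it writes the dual norm as a sum), but your treatment is the precise one.
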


Let $1\le p<\infty$ and let $Y:=L^p(\Om) \times L^p(\RR^N \times \RR^N)$ be endowed with the  norm 
    \[ 
        \|(v_1,v_2)\|_{Y}
          =\left( \|v_1\|_{L^p}^p + \|v_2\|_{L^p(\RR^N \times \RR^N)}^p \right)^{\frac{1}{p}}  . 
    \]
For $v\in \widetilde{W}_0^{s,p}(\Om)$, we associate the vector $Pv \in Y$ given by 
\begin{align} \label{isom}
 Pv=(v,D_{s,p}v).
\end{align}
Since 
$\|Pv\|_{Y}=\|(v,D_{s,p}v)\|_{Y}=\|v\|_{\widetilde{W}_0^{s,p}(\Om)}$, we have that  $P$ is an 
isometry and hence, injective, so $P:\widetilde{W}_0^{s,p}(\Om)\mapsto Y$ is an isometric isomorphism of 
$\widetilde{W}_0^{s,p}(\Om)$ onto its image $Z\subset Y$. Also, $Z$ is a closed subspace of $Y$, 
because $\widetilde{W}_0^{s,p}(\Om)$ is complete (isometries preserve completion).

Throughout this section without any mention, we shall let
\begin{align*}
Y:=L^p(\Om) \times L^p(\RR^N \times \RR^N).
\end{align*}

\begin{lemma}\label{lemma:ref1}
 Let $1\le p < \infty$. Then for every $f\in Y^\star$, there exists a unique $u=(u_1,u_2)\in L^{p'}(\Om) \times L^{p'}(\RR^N \times \RR^N)$  such that for every $v=(v_1,v_2)\in Y$, we have 
    \[
        f(v)=\langle u_1,v_1 \rangle_{L^{p'}(\Om),L^p(\Om)} + \langle u_2,v_2 \rangle_{L^{p'}(\RR^N \times \RR^N),L^p(\RR^N \times \RR^N)}  . 
    \]
Moreover, 
    \[ 
        \| f \|_{Y^*} = \| u \|_{L^{p'}(\Om) \times L^{p'}(\RR^N \times \RR^N)}= \| u_1 \|_{L^{p'}(\Om) } + \| u_2 \|_{L^{p'}(\RR^N \times \RR^N)}  .
    \]
\end{lemma}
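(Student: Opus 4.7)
The plan is to combine the abstract duality of product Banach spaces given by Lemma \ref{cong} with the classical Riesz representation of the dual of Lebesgue $L^p$ spaces. The statement is essentially a direct consequence of these two ingredients, so the argument is structural rather than computational.

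First, I would apply Lemma \ref{cong} with $X:=L^p(\Omega)$ and $W:=L^p(\RR^N\times\RR^N)$. This gives an isometric isomorphism
\[
 Y^\star = \bigl(L^p(\Omega)\times L^p(\RR^N\times\RR^N)\bigr)^\star \cong \bigl(L^p(\Omega)\bigr)^\star \times \bigl(L^p(\RR^N\times\RR^N)\bigr)^\star,
\]
so every $f\in Y^\star$ decomposes uniquely as $f(v_1,v_2)=f_1(v_1)+f_2(v_2)$, where $f_1\in (L^p(\Omega))^\star$ and $f_2\in (L^p(\RR^N\times\RR^N))^\star$ are obtained by restriction to the respective factors $\{(v_1,0)\}$ and $\{(0,v_2)\}$.

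Next, I would invoke the Riesz representation theorem for $L^p$ spaces over $\sigma$--finite measure spaces. Both $(\Omega,dx)$ and $(\RR^N\times\RR^N,dx\,dy)$ are $\sigma$--finite, and since $1\le p<\infty$ (so $p'\in(1,\infty]$ is an admissible exponent in both cases, including $p=1$), there exist unique $u_1\in L^{p'}(\Omega)$ and $u_2\in L^{p'}(\RR^N\times\RR^N)$ with
\[
 f_1(v_1)=\int_{\Omega}u_1 v_1\,dx,\qquad f_2(v_2)=\int_{\RR^N}\int_{\RR^N}u_2 v_2\,dx\,dy,
\]
and with $\|f_1\|_{(L^p(\Omega))^\star}=\|u_1\|_{L^{p'}(\Omega)}$ and $\|f_2\|_{(L^p(\RR^N\times\RR^N))^\star}=\|u_2\|_{L^{p'}(\RR^N\times\RR^N)}$. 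Setting $u:=(u_1,u_2)$ gives the required representation
\[
 f(v)=\langle u_1,v_1\rangle_{L^{p'}(\Omega),L^{p}(\Omega)}+\langle u_2,v_2\rangle_{L^{p'}(\RR^N\times\RR^N),L^{p}(\RR^N\times\RR^N)}.
\]

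Finally, the norm identity follows by combining the two displayed facts: the isometric identification from Lemma \ref{cong} transports the norm of $Y^\star$ to the product norm on $(L^p(\Omega))^\star\times (L^p(\RR^N\times\RR^N))^\star$, and Riesz representation transports the latter to the corresponding $L^{p'}$ norms, yielding $\|f\|_{Y^\star}=\|u_1\|_{L^{p'}(\Omega)}+\|u_2\|_{L^{p'}(\RR^N\times\RR^N)}$. Uniqueness of $u$ follows from the uniqueness of the Riesz representatives in each coordinate. There is no real obstacle here; the only care needed is to verify $\sigma$--finiteness of both underlying measure spaces so that Riesz representation applies in the endpoint case $p=1$, and to track the norms through the two isomorphisms.
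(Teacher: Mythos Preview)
Your representation argument is correct and mirrors the paper's: both restrict $f$ to each factor, obtain $f_1,f_2$, apply Riesz representation on each $L^p$ space, and add. You package this via Lemma~\ref{cong}; the paper carries it out by hand, checking linearity and boundedness of $f_1,f_2$ explicitly. No difference in substance there.

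Where you are too quick is the norm identity. Two issues. First, $Y$ is equipped in the paper with the $\ell^p$-type norm $\|(v_1,v_2)\|_Y=(\|v_1\|_{L^p}^p+\|v_2\|_{L^p}^p)^{1/p}$, \emph{not} the sum norm appearing in Lemma~\ref{cong}, so that lemma does not apply isometrically to $Y$ as defined. Second, even if $Y$ carried the sum norm, the dual of a sum norm is the max norm, not the sum norm; so an abstract isometry argument of the kind you sketch cannot produce $\|f\|_{Y^\star}=\|u_1\|_{L^{p'}}+\|u_2\|_{L^{p'}}$. The paper does not rely on Lemma~\ref{cong} for the norm equality; instead it argues directly: H\"older gives $\|f\|_{Y^\star}\le \|u\|_{L^{p'}\times L^{p'}}$, and for the reverse direction it builds explicit test functions $v_i=|u_i|^{p'-2}\overline{u_i}$ (case $1<p<\infty$) and a separate construction for $p=1$. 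If you want to match the stated conclusion, you need to do that computation rather than invoke an abstract isometry.
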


\begin{proof}
    Let $w\in L^{p}(\Om)$. Then $(w,0)\in Y$. We define $f_1(w):=f(w,0)$. Then 
    $f_1\in (L^{p}(\Om))^\star$. For arbitrary $w_1,w_2 \in L^{p}(\Om)$ and for scalars 
    $\alpha, \beta$, we have 
    \begin{align*}
        f_1(\alpha w_1+\beta w_2)&=f(\alpha w_1+\beta w_2,0)
          =f(\alpha (w_1,0)+\beta (w_2,0)) \\ 
         &=\alpha f((w_1,0))+\beta f((w_2,0)) \\
         &=\alpha f_1( w_1)+\beta f_1(w_2) ,
    \end{align*}
    and for all $w \in L^p(\Om)$, we have that 
    \begin{align*}
        |f_1(w)|=|f((w,0))|\leq \|f\|_{Y^\star} \|(w,0)\|_{Y}
         =\|f\|_{Y^\star} \|w\|_{L^{p}(\Om)}  .
    \end{align*}
    Thus $f_1 \in (L^p(\Om))^\star$. Notice that $(L^p(\Om))^\star = L^{p'}(\Om)$. 
    
    Similarly, let $w\in L^{p}(\RR^N \times \RR^N)$ then $(0,w)\in Y$. Thus for 
    $w\in L^{p}(\RR^N \times \RR^N)$, if we define $f_2(w):=f(0,w)$, then 
    $f_2\in (L^{p}(\RR^N \times \RR^N))^\star$. Notice that 
    $(L^{p}(\RR^N \times \RR^N))^\star = L^{p'}(\RR^N \times \RR^N)$. 

    Hence, by the Riesz Representation theorem there exist a unique $u_1\in L^{p'}(\Om)$ 
    and a unique $u_2\in L^{p'}(\RR^N \times \RR^N)$ such that 
    \[ 
        f(v_1,0)=f_1(v_1) = \langle u_1,v_1 \rangle_{L^{p'}(\Om),L^p(\Om)}  \quad  \forall \; v_1 \in L^{p}(\Om) 
    \] 
    and 
    \[ 
        f(0,v_2) = f_2(v_2) = \langle u_2,v_2 \rangle_{L^{p'}(\RR^N \times \RR^N),L^p(\RR^N \times \RR^N)}  \quad  \forall\; v_2\in L^{p}(\RR^N \times \RR^N) . 
    \]
    
    Now let $v := (v_1,v_2)$. Notice that $v$ is an arbitrary element of $Y$ and 
    we can write $v=(v_1,v_2)=(v_1,0)+(0,v_2)$. Hence 
    \begin{align*}
        f(v)=f(v_1,0)+f(0,v_2)=f_1(v_1)+f_2(v_2)
            =  \langle u_1,v_1 \rangle_{L^{p'}(\Om),L^p(\Om)} + \langle u_2,v_2 \rangle_{L^{p'}(\RR^N \times \RR^N),L^p(\RR^N \times \RR^N)} . 
    \end{align*}
    Moreover, 
    \begin{align*}
        |f(v)|& \leq \|u_1\|_{L^{p'}(\Om)} \| v_1 \|_{L^{p}(\Om)}  
            + \|u_2\|_{L^{p'}(\RR^N \times \RR^N)} \|v_2\|_{L^{p}(\RR^N \times \RR^N)} \\
        & \leq \|{u}\|_{L^{p'}(\Om) \times L^{p'}(\RR^N \times \RR^N)} \|{v}\|_{Y}.
    \end{align*}
    Therefore, 
    \begin{equation}\label{eq:neq}
        \|f\|_{Y^\star} \leq \|u\|_{L^{p'}(\Om) \times L^{p'}(\RR^N \times \RR^N)}  . 
    \end{equation}
    The proof of the first part is complete. It then remains to show that the norms in 
    \eqref{eq:neq} are equal. 
    
    Let us first consider the case $1 < p < \infty$. Define 
    \[ 
        v_1(x)=
        \begin{cases} 
            |u_1(x)|^{p'-2}\overline{u_1(x)}, & \text{if } u_1(x)\neq 0 ,\\ 
                0 ,  & \text{if } u_1(x)=0 ,
        \end{cases} 
    \] 
    and  
    \[ 
        v_2(x,y)=
        \begin{cases} 
            |u_2(x,y)|^{p'-2}\overline{u_2(x,y)}, & \text{if } u_2(x,y)\neq 0, \\ 
             0, &         \text{if } u_2(x,y)=0 .
        \end{cases} 
    \]
    Then, for $v=(v_1,v_2)$, we have that 
    \begin{align*}
        |f(v)|&=|f(v_1,v_2)|=|f((v_1,0)+(0,v_2))|=|f_1(v_1)+f_2(v_2)| \\
        & =\left|\ \langle u_1,v_1 \rangle_{L^{p'}(\Om),L^p(\Om)} + \langle u_2,v_2 \rangle_{L^{p'}(\RR^N \times \RR^N),L^p(\RR^N \times \RR^N)} \right |\\ 
          &=\|u_1\|_{L^{p'}(\Om)}^{p'} + \| u_2 \|_{L^{p'}(\RR^N \times \RR^N)}^{p'} \\
        &=\|u\|_{L^{p'}(\Om)\times L^{p'}(\RR^N \times \RR^N)}^{p'}
         =|\langle u,v \rangle_{Y^\star,Y}|  
         = \| v \|_{Y} \| u \|_{Y^\star} 
         = \| v \|_{Y} \| u \|_{L^{p'}(\Om)\times L^{p'}(\RR^N \times \RR^N)},
    \end{align*}
where we have used the equality in H\"older's inequality, the equality holds because $|v_i|^p=|u_i|^{p'}$. Moreover, we have used the fact that $ Y^\star \cong L^{p'}(\Om)\times L^{p'}(\RR^N \times \RR^N)$ due to Lemma \ref{cong}. 

Let us consider the case $p=1$, then $Y = L^1(\Om) \times L^1(\RR^N\times \RR^N)$ and 
we can set (due to Lemma~\ref{cong}) $Y^\star = L^\infty(\Om) \times L^\infty(\RR^N\times \RR^N)$. Notice that 
$\|u \|_{Y^\star} :=\max \left\{\|u_1 \|_{L^\infty(\Om)}, \|u_2\|_{L^\infty(\RR^N\times \RR^N)} \right\}$. 
It is sufficient to show that $\|f\|_{Y^\star} \ge \|u\|_{Y^\star}$ to get the desired result.
Now for any $\epsilon > 0$ and $k=1$ there exists a measurable set $A\subset \Omega$ 
(or $\subset \RR^N \times \RR^N$ when $k=2$) with finite, non zero measure such that 
$|u_k(x)|\geq \|u \|_{Y^\star}-\epsilon$, 
$\forall \; x\in A$.

Next, we define 
    \[ 
        v_k(x)=
            \begin{cases} 
                \frac{\overline{u_k(x)}}{|u_k(x)|} & \text{for  } x\in A\; \text{and } 
                u_k(x)\neq 0, \\ 0 & \text{elsewhere }  . 
            \end{cases} 
    \]
Set $v=(v_k,0)$ if $k=1$, otherwise set $v=(0,v_k)$. Then,
\[
    |f(v)|=|\langle u_k,v_k \rangle_{Y^\star,Y}|=\int_{A} |u_k(x)| \, dx 
    \geq \left( \|u \|_{Y^\star}-\epsilon  \right)\|v\|_{Y} . 
\] 
Since $\epsilon$ is chosen arbitrarily, the result follows from the definition of operator norm. 
\end{proof}

\begin{theorem}
\label{thm:chartW}
Let $1\le p<\infty$. Assume $f\in \widetilde{W}^{-s,p'}(\Om)$. Then there exists 
$(f^0,f^1) \in L^{p'}(\Om)\times L^{p'}(\RR^N \times \RR^N)$ such that 
    \begin{equation}\label{char2}
        \langle f,v \rangle_{\widetilde{W}^{-s,p'}(\Om), \widetilde{W}^{s,p}_0(\Om)} 
            = \int_{\Omega} f^0 v \, dx + \int_{\RR^N} \int_{\RR^N} f^1 D_{s,p} v[x,y] 
                    \; dx \; dy, \quad \forall\; v\in \widetilde{W}^{s,p}_0(\Om) ,
    \end{equation}
and
    \begin{align}\label{norm2}        
        \|f\|_{\widetilde{W}^{-s,p'}(\Om)} 
            = \inf \left\{ \|(f^0,f^1)\|_{L^{p'}(\Om)\times L^{p'}(\RR^N \times\RR^N)}\right\}     
    \end{align} 
where the infimum is taken over all $(f^0,f^1) \in L^{p'}(\Om)\times L^{p'}(\RR^N \times \RR^N)$ for which \eqref{char2} holds for every $v\in \widetilde{W}^{s,p}_0(\Om)$. Moreover, if $1 < p < \infty$ then $(f^0,f^1) $ is unique. 
\end{theorem}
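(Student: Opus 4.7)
The strategy is to transport $f$ via the isometry $P:\widetilde{W}_0^{s,p}(\Omega)\to Y$ from \eqref{isom}, extend it by Hahn--Banach to all of $Y$, and then apply Lemma~\ref{lemma:ref1} to produce the representing pair $(f^0,f^1)$.

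In detail, write $Z=P(\widetilde{W}_0^{s,p}(\Omega))$; by the discussion preceding Lemma~\ref{lemma:ref1}, $Z$ is a closed subspace of $Y$ and $P$ is an isometric isomorphism of $\widetilde{W}_0^{s,p}(\Omega)$ onto $Z$. For $f\in\widetilde{W}^{-s,p'}(\Omega)$, define $\tilde f\in Z^\star$ by $\tilde f(Pv):=\langle f,v\rangle_{\widetilde{W}^{-s,p'}(\Omega),\widetilde{W}_0^{s,p}(\Omega)}$; then $\|\tilde f\|_{Z^\star}=\|f\|_{\widetilde{W}^{-s,p'}(\Omega)}$. By the Hahn--Banach theorem, extend $\tilde f$ to some $F\in Y^\star$ with $F|_Z=\tilde f$ and $\|F\|_{Y^\star}=\|\tilde f\|_{Z^\star}$. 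Lemma~\ref{lemma:ref1} then yields $(f^0,f^1)\in L^{p'}(\Omega)\times L^{p'}(\RR^N\times\RR^N)$ such that
\[
F(w_1,w_2)=\int_{\Omega}f^0 w_1\,dx+\int_{\RR^N}\int_{\RR^N}f^1 w_2\,dxdy,\quad (w_1,w_2)\in Y,
\]
with $\|(f^0,f^1)\|_{L^{p'}(\Omega)\times L^{p'}(\RR^N\times\RR^N)}=\|F\|_{Y^\star}=\|f\|_{\widetilde{W}^{-s,p'}(\Omega)}$. Evaluating at $(w_1,w_2)=Pv=(v,D_{s,p}v)$ recovers \eqref{char2}, and this representation shows $\inf\|(f^0,f^1)\|\le\|f\|_{\widetilde{W}^{-s,p'}(\Omega)}$.

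The reverse inequality in \eqref{norm2} follows from \eqref{char2}: for any pair satisfying \eqref{char2} and any $v\in\widetilde{W}_0^{s,p}(\Omega)$, H\"older's inequality on each factor yields $|\langle f,v\rangle|\le\|(f^0,f^1)\|\,\|Pv\|_Y=\|(f^0,f^1)\|\,\|v\|_{\widetilde{W}_0^{s,p}(\Omega)}$, so that $\|f\|_{\widetilde{W}^{-s,p'}(\Omega)}\le\inf\|(f^0,f^1)\|$. For uniqueness when $1<p<\infty$, the set of pairs satisfying \eqref{char2} is a nonempty closed affine subspace of $L^{p'}(\Omega)\times L^{p'}(\RR^N\times\RR^N)$, namely the preimage of $f$ under the continuous linear map sending $(g^0,g^1)$ to the functional on $\widetilde{W}_0^{s,p}(\Omega)$ defined by the right-hand side of \eqref{char2}. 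Since $1<p'<\infty$, both $L^{p'}$ factors are uniformly convex and so is their product in the norm induced by Lemma~\ref{lemma:ref1}; the minimum-norm element of a nonempty closed convex set in such a space is unique, so $(f^0,f^1)$ is uniquely determined.

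The main obstacle is the Hahn--Banach step: producing a norm-preserving extension and then verifying that Lemma~\ref{lemma:ref1} reconstructs the action of $f$ on all of $\widetilde{W}_0^{s,p}(\Omega)$ through the explicit kernel $(f^0,f^1)$, rather than merely on the abstract copy $Z$. Once this identification is in place, both the norm identity and the strict-convexity-based uniqueness follow from standard functional-analytic facts.
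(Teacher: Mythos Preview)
Your proposal is correct and follows essentially the same approach as the paper: transport $f$ to $Z$ via the isometry $P$, apply Hahn--Banach to extend to $Y^\star$ without increasing the norm, and invoke Lemma~\ref{lemma:ref1} to obtain the representing pair; the infimum identity and uniqueness then follow from the same norm-preservation and uniform-convexity considerations you outline. Your treatment of uniqueness (identifying the representing pairs as a closed affine set and selecting the unique minimum-norm element) is slightly more explicit than the paper's, but the underlying argument is the same.
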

\begin{proof}
Define the linear functional $\widehat{L} : Z \rightarrow \mathbb{R}$, where $Z \subset Y$ is 
the range of $P$ given in  \eqref{isom}, by 
    \[ 
        \widehat{L}(Pv)=f(v), \quad v\in \widetilde{W}_0^{s,p}(\Om) 
    \]
\begin{center}
\begin{tikzpicture}
    \node (E) at (0,0) {$\widetilde{W}_0^{s,p}(\Om) $};
    \node[right=of E] (F) {$Z\subset Y$};
    \node (R) at (1.2,-1.3) {$\RR$};
    \draw[->] (E)--(F) node [midway,above] {$P$};
    \draw[->] (F)--(R) node [midway,right] {$\widehat{L}$} ;
    \draw[->] (E)--(R) node [midway,left] {$f$} ;
\end{tikzpicture}
\end{center}
Since $P$ is an isometric isomorphism onto $Z$, it follows that $\widehat{L}\in Z^\star$ and
\begin{align*}
    \| \widehat{L} \|_{Z^\star} 
        = \sup_{\|Pv\|_{Y}=1}| \langle \widehat{L},Pv \rangle_{Y^\star,Y}|
        =\sup_{\|v\|_{\widetilde{W}_0^{s,p}}=1}|\langle f,v \rangle_{\widetilde{W}^{-s,p'}(\Om), \widetilde{W}^{s,p}_0(\Om)} |
        =\|f\|_{W^{-s,p'}(\Om)} . 
\end{align*} 
Then, by the Hahn-Banach extension theorem, there exists an 
$L\in Y^\star = L^{p'}(\Om)\times L^{p'}(\RR^N \times \RR^N)$ such that 
$\|L\|_{Y^\star}=\|\widehat{L}\|_{Z^\star}$. Since $L \in Y^\star$, using Lemma \ref{lemma:ref1},
there exists $(f^0,f^1)\in L^{p'}(\Om)\times L^{p'}(\RR^N \times \RR^N)$ such that 
for $v=(v_1,v_2)\in Y$, we have
    \[ 
        L(v)=\langle f^0,v_1 \rangle_{L^{p'}(\Om),L^{p}(\Om)} + \langle f^1,v_2 \rangle_{L^{p'}(\RR^N \times \RR^N),L^{p}(\RR^N \times \RR^N)} = \int_{\Omega} f^0 v_1 \; dx + \int_{\RR^N} \int_{\RR^N} f^1  v_2 \; dx  dy . 
    \]
Notice that when $1 < p < \infty$, $(f^0,f^1)$ is unique due to the uniform convexity of the Banach space $L^p(\Om) \times L^p(\RR^N\times \RR^N)$. 
    
Thus, for $v\in \widetilde{W}_0^{s,p}(\Om)$ we have $Pv\in Y$. Then using the definition of 
$\widehat{L}$ we get
    \[ 
        f(v)=\widehat{L}(Pv)= L(Pv)=L(v,D_{s,p}v)=\int_{\Omega} f^0 v \, dx + \int_{\RR^N} \int_{\RR^N} f^1  D_{s,p}v \; dx  dy 
    \] 
which is \eqref{char2} after noticing that $ \langle f,v \rangle_{\widetilde{W}^{-s,p'}(\Om), \widetilde{W}^{s,p}_0(\Om)}  = f(v)$. Moreover, we have 
    \[ 
        \|f\|_{\widetilde{W}^{-s,p'}(\Om)}=\|\widehat{L}\|_{Z^\star}
             =\|L\|_{Y^\star}
             =\|(f^0,f^1)\|_{L^{p'}(\Om)\times L^{p'}(\RR^N \times \RR^N)} .
    \]
The proof for the case $1 < p < \infty$ is complete.     
    
Now, for arbitrary $(g^0,g^1)\in L^{p'}(\Om)\times L^{p'}(\RR^N \times \RR^N)$, for which \eqref{char2} holds for all 
$v\in \widetilde{W}_0^{s,p}(\Om)$, we can define $L_g$ as 
    \[
        L_g(u) = \langle g^0, u_1 \rangle_{L^{p'}(\Om),L^p(\Om)} 
         + \langle g^1, u_2 \rangle_{L^{p'}(\RR^N\times\RR^N),L^{p}(\RR^N\times\RR^N)}, \quad 
         \forall \;u \in Y . 
    \]
Then $L_g \in Y^\star$ and $L_g|_Z = \widehat{L}$ (due to \eqref{char2}). As a result, 
    \[
        \|\widehat{L}\|_{Z^\star} \le \|L_g\|_{Y^\star} .
    \]
Thus 
    \[
        \|f\|_{\widetilde{W}^{-s,p'}(\Om)} \le \|g\|_{L^{p'}(\Om)\times L^{p'}(\RR^N\times \RR^N)} . 
    \]    
The proof is complete. 
\end{proof}

In view of Theorem~\ref{thm:gris2}(b), for $\frac{1}{p} < s < 1$ we can everywhere replace 
$\RR^N \times \RR^N$ in Theorem~\ref{thm:chartW} by $\Om \times \Om$. More precisely, we have the following result.

\begin{corollary}\label{cor-dual}
Let $1 < p < \infty$ and $\frac{1}{p} < s < 1$. Let $f\in \widetilde{W}^{-s,p'}(\Om)$. Then there exists a unique $(f^0,f^1) \in L^{p'}(\Om)\times L^{p'}(\Om \times \Om)$ such that 
    \begin{equation}\label{char21}
         \langle f,v \rangle_{\widetilde{W}^{-s,p'}(\Om), \widetilde{W}^{s,p}_0(\Om)} 
            = \int_{\Omega} f^0 v \, dx + \int_{\Om} \int_{\Om} f^1 D_{s,p} v[x,y] 
                    \; dx \; dy, \quad \forall\; v\in \widetilde{W}^{s,p}_0(\Om) ,
    \end{equation}
and
    \begin{align}\label{norm21}        
        \|f\|_{\widetilde{W}^{-s,p'}(\Om)} 
            = \inf \left\{ \|(f^0,f^1)\|_{L^{p'}(\Om)\times L^{p'}(\Om \times\Om)}\right\}     
    \end{align} 
where the infimum is taken over all $(f^0,f^1) \in L^{p'}(\Om)\times L^{p'}(\Om \times \Om)$ for which \eqref{char2} holds for every $v\in \widetilde{W}^{s,p}_0(\Om)$.
\end{corollary}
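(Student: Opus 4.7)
The plan is to repeat the proof of Theorem~\ref{thm:chartW} verbatim, but with the ambient product space $Y$ replaced by the smaller space $\widehat Y := L^p(\Omega)\times L^p(\Omega\times\Omega)$. The only point that needs justification is that the embedding $\widehat P : \widetilde W_0^{s,p}(\Omega) \to \widehat Y$, defined by $\widehat P v := (v, D_{s,p}v|_{\Omega\times\Omega})$, is still an isomorphism onto a closed subspace $\widehat Z\subset \widehat Y$. This is precisely where the hypothesis $\frac{1}{p}<s<1$ is used.

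First, by Theorem~\ref{thm:gris2}(b) we have $\widetilde W_0^{s,p}(\Omega)=W_0^{s,p}(\Omega)$ with equivalent norms when $\frac{1}{p}<s<1$ and $\Omega$ is Lipschitz. Combining this with formula \eqref{eq:blah} (equivalently, the observation that in \eqref{eq:eqnorm} the contribution of the weight $\kappa$ is controlled by $\|D_{s,p}u\|_{L^p(\Omega\times\Omega)}^p$), there exist constants $c_1,c_2>0$ such that
\[
c_1 \|u\|_{\widetilde W_0^{s,p}(\Omega)} \le \|\widehat P u\|_{\widehat Y} \le c_2 \|u\|_{\widetilde W_0^{s,p}(\Omega)}, \qquad \forall\, u\in \widetilde W_0^{s,p}(\Omega).
\]
Hence $\widehat P$ is a linear topological isomorphism onto its image $\widehat Z$, which is closed in $\widehat Y$ by completeness of $\widetilde W_0^{s,p}(\Omega)$.

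Given $f\in \widetilde W^{-s,p'}(\Omega)$, define $\widehat L : \widehat Z \to \mathbb R$ by $\widehat L(\widehat P v) = f(v)$. Then $\widehat L$ is bounded on $\widehat Z$, so by the Hahn--Banach theorem it extends to some $L\in \widehat Y^\star$. An application of Lemma~\ref{lemma:ref1} (with $\mathbb R^N\times\mathbb R^N$ replaced by $\Omega\times\Omega$, which requires only trivial modifications in the proof) produces a unique pair $(f^0,f^1)\in L^{p'}(\Omega)\times L^{p'}(\Omega\times\Omega)$ representing $L$. Evaluating $L$ on $\widehat P v$ yields \eqref{char21}. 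The norm identity \eqref{norm21} follows from the isometric realization combined with the infimum taken over all admissible extensions, exactly as in the last paragraph of the proof of Theorem~\ref{thm:chartW}; uniqueness of $(f^0,f^1)$ is a consequence of the uniform convexity of $L^{p'}(\Omega)\times L^{p'}(\Omega\times\Omega)$ for $1<p<\infty$.

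The only step that might look delicate is verifying that $\widehat P$ remains an isomorphism onto a closed subspace, since this is precisely where the condition $\frac{1}{p}<s<1$ enters; once that equivalence of norms is in hand, the remainder of the argument is a mechanical transcription of the proof of Theorem~\ref{thm:chartW}.
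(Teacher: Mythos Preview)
Your proposal is correct and follows essentially the same approach as the paper: the paper does not give a separate proof of Corollary~\ref{cor-dual} at all, but simply states before it that ``in view of Theorem~\ref{thm:gris2}, for $\frac{1}{p}<s<1$ we can everywhere replace $\RR^N\times\RR^N$ in Theorem~\ref{thm:chartW} by $\Omega\times\Omega$,'' and your argument is exactly the elaboration of that sentence. One small point: you correctly observe that $\widehat P$ is only a \emph{topological} isomorphism (not an isometry) under the equivalent norms of Theorem~\ref{thm:gris2} and \eqref{eq:blah}, yet later you write ``isometric realization'' --- for the exact norm identity \eqref{norm21} to hold, one should understand $\widetilde W_0^{s,p}(\Omega)$ as carrying the (equivalent) norm $\|\widehat P\cdot\|_{\widehat Y}$, which the paper implicitly does; otherwise \eqref{norm21} holds only up to the norm-equivalence constants.
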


\section{Improved regularity of state and higher regularity of adjoint}\label{regularity}

%

In this section, we study the higher regularity properties of solutions to the  Dirichlet problem \eqref{eq:Sd}
with right hand side $z\in \widetilde W^{-t,p}(\Omega)$ for some suitable $p\in (1,\infty)$ and $0<t<1$.

Throughout the remainder of this section, for $u,v\in  \widetilde W_0^{s,2}(\Omega)$, we shall let
\begin{align*}
\mathcal E(u,v):=\frac{C_{N,s}}{2}\int_{\RR^N}\int_{\RR^N}\frac{(u(x)-u(y))(v(x)-v(y))}{|x-y|^{N+2s}}\;dxdy.
\end{align*}

We start with the following theorem result which can be viewed as  the first main result of this section.

\begin{theorem}\label{Main1}
Let $f_0\in L^p(\Omega)$ and $f_1\in L^q(\Omega\times\Omega)$ with $p>\frac N{2s}$ and $q>\frac Ns$. Then there exists a unique function $u\in  \widetilde{W}_0^{s,2}(\Om)$ satisfying
\begin{align}\label{Eq-G1}
\mathcal E(u,v)=\int_{\Omega}f_0v\;dx+\int_{\Omega}\int_{\Omega}f_1(x,y) D_{s,2} v[x,y] \;dxdy,
\end{align}
for every $v\in   \widetilde{W}_0^{s,2}(\Om)$. In addition, $u\in L^\infty(\Omega)$ and there is a constant $C>0$ such that
\begin{align}\label{Eq-G2}
\|u\|_{L^\infty(\Om)}\le C\left(\|f_0\|_{L^p(\Omega)}+\|f_1\|_{L^q(\Om\times\Om)}\right).
\end{align}
\end{theorem}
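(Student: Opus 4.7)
The plan is to split the proof into two steps: first, establish existence and uniqueness via Lax--Milgram; then, derive the $L^\infty$ bound via a Stampacchia truncation argument.

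For existence and uniqueness, I would verify that
\[
F(v) := \int_\Om f_0 v\,dx + \int_\Om\int_\Om f_1(x,y)\, D_{s,2}v[x,y]\,dx\,dy
\]
is a bounded linear functional on $\widetilde W_0^{s,2}(\Om)$. The assumption $p > N/(2s)$ gives $p' < N/(N-2s) < 2^\star$, so H\"older on $\Om$ and the Sobolev embedding from Remark~\ref{remark}(1) yield $|\int_\Om f_0 v|\le C\|f_0\|_{L^p}\|v\|_{\widetilde W_0^{s,2}}$. Since $q > N/s$ implies $q' < 2$ (in the main case $N>2s$; the cases $N=2s$ and $N<2s$ are handled similarly using the alternative embeddings in \eqref{inj1}), H\"older on the bounded set $\Om\times\Om$ together with the inequality $\|D_{s,2}v\|_{L^2(\Om\times\Om)}\le\|v\|_{\widetilde W_0^{s,2}}$ bounds the second term by $C\|f_1\|_{L^q(\Om\times\Om)}\|v\|_{\widetilde W_0^{s,2}}$. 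Since $\mathcal E$ is a coercive, symmetric, continuous bilinear form on $\widetilde W_0^{s,2}(\Om)$, Lax--Milgram then produces a unique $u \in \widetilde W_0^{s,2}(\Om)$ satisfying \eqref{Eq-G1}.

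For the $L^\infty$ bound, fix $k \ge 0$ and set $G_k(u):=(u-k)^+$ and $A_k := \{x\in\Om : u(x)>k\}$. Since $u=0$ on $\RR^N\setminus\Om$ and $k\ge 0$, the truncation $G_k(u)\in\widetilde W_0^{s,2}(\Om)$ and is an admissible test function. The pointwise inequality $(u(x)-u(y))(G_k(u)(x)-G_k(u)(y))\ge(G_k(u)(x)-G_k(u)(y))^2$ yields $\mathcal E(u,G_k(u))\ge (C_{N,s}/2)\|G_k(u)\|^2_{\widetilde W_0^{s,2}}$ on the left-hand side of \eqref{Eq-G1}. On the right-hand side, H\"older and Sobolev give
\[
\int_\Om f_0 G_k(u)\le C\|f_0\|_{L^p}|A_k|^{\alpha_1}\|G_k(u)\|_{\widetilde W_0^{s,2}},\qquad \alpha_1:=1/p'-1/2^\star,
\]
and noting that $D_{s,2}G_k(u)[x,y]$ is supported in $(A_k\times\Om)\cup(\Om\times A_k)$, which has measure at most $2|\Om||A_k|$, H\"older gives
\[
\int_\Om\int_\Om f_1\,D_{s,2}G_k(u) \le C\|f_1\|_{L^q}|A_k|^{\alpha_2}\|G_k(u)\|_{\widetilde W_0^{s,2}},\qquad \alpha_2:=1/q'-1/2.
\]
Dividing through by $\|G_k(u)\|_{\widetilde W_0^{s,2}}$ and combining with the standard Chebyshev-plus-Sobolev bound $(h-k)^{2^\star}|A_h|\le C\|G_k(u)\|^{2^\star}_{\widetilde W_0^{s,2}}$ for $h>k$ yields
\[
|A_h|\le \frac{C}{(h-k)^{2^\star}}\Big(\|f_0\|^{2^\star}_{L^p}|A_k|^{\beta_1}+\|f_1\|^{2^\star}_{L^q}|A_k|^{\beta_2}\Big),\qquad \beta_i:=2^\star\alpha_i.
\]
A direct calculation shows $\beta_1>1\Leftrightarrow p>N/(2s)$ and $\beta_2>1\Leftrightarrow q>N/s$, exactly our hypotheses. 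Stampacchia's iteration lemma (after consolidating the two terms using $|A_k|\le|\Om|$) then produces an explicit $d>0$, depending only on $\|f_0\|_{L^p}$, $\|f_1\|_{L^q}$, $\Om$, $N$, and $s$, with $|A_d|=0$, hence $u\le d$ a.e. Applying the same argument to $-u$ (which solves \eqref{Eq-G1} with data $-f_0,-f_1$) yields the lower bound, and together they give \eqref{Eq-G2}.

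The main obstacle I anticipate is the nonlocal bookkeeping for the $f_1$ term: since $D_{s,2}G_k(u)[x,y]$ is a two-variable object, one must track pairs $(x,y)$ with only one of $x,y$ lying in $A_k$ so that the measure bound $|(A_k\times\Om)\cup(\Om\times A_k)|\le 2|\Om||A_k|$ produces the correct power of $|A_k|$. Verifying that both $\beta_1$ and $\beta_2$ strictly exceed $1$ under precisely the stated hypotheses is the sharp part of the argument, and is what forces the two conditions $p > N/(2s)$ and $q > N/s$ to interlock.
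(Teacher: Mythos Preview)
Your proposal is correct and follows essentially the same approach as the paper: Lax--Milgram for existence/uniqueness, then a Stampacchia truncation and level-set iteration for the $L^\infty$ bound, with the key observation that the exponents exceed $1$ precisely when $p>N/(2s)$ and $q>N/s$. The only cosmetic differences are that the paper uses the two-sided truncation $u_k=(|u|-k)^+\operatorname{sgn}(u)$ in a single pass (rather than your one-sided $(u-k)^+$ applied separately to $\pm u$), and tracks $\|\chi_{A_k}\|_{L^{2^\star}}$ instead of $|A_k|$ in the iteration; both lead to the same recursion and the same application of the iteration lemma (your Stampacchia lemma is the paper's Lemma~\ref{lem-01}).
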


To prove the theorem we need the following lemma which is of analytic nature and will be useful in deriving some a priori estimates of weak solutions of elliptic type equations (see e.g. \cite[Lemma B.1.]{kinderlehrer1980}). 

\begin{lemma}\label{lem-01}
Let $\Phi = \Phi(t)$ be a nonnegative, non-increasing function on a half line $t\ge k_0\ge 0$ such that there are positive constants $c, \alpha$ and $\delta$ ($\delta >1$) with
\begin{equation*}
\Phi(h) \le c(h-k)^{-\alpha}\Phi(k)^{\delta}\mbox{ for }  h>k\ge k_0.
\end{equation*}
Then
\begin{equation*}
\Phi(k_0+d) = 0\quad \mbox{ with }\quad d^{\alpha}= c \Phi(k_0)^{\delta -1}2^{\alpha\delta/(\delta -1)}.
\end{equation*}
\end{lemma}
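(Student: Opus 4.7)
The plan is to prove this classical Stampacchia-type iteration lemma by constructing a carefully chosen increasing sequence of levels $k_n \nearrow k_0 + d$, showing inductively that $\Phi(k_n)$ decays geometrically, and then invoking monotonicity of $\Phi$ to conclude. The ratio of the geometric decay and the exponent in the definition of $d$ will be dictated by the structure of the recursion that the hypothesis produces.

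First, I would fix $d>0$ (to be chosen in the form stated) and define the level sequence
\begin{equation*}
k_n := k_0 + d - \frac{d}{2^n}, \qquad n=0,1,2,\ldots,
\end{equation*}
so that $k_0 = k_0$, $k_n$ is strictly increasing, $k_n \nearrow k_0+d$, and $k_{n+1}-k_n = d/2^{n+1}$. Applying the hypothesis with $h=k_{n+1}$ and $k=k_n$ yields the recursion
\begin{equation*}
\Phi(k_{n+1}) \le c\,(k_{n+1}-k_n)^{-\alpha} \Phi(k_n)^{\delta} = \frac{c\, 2^{\alpha(n+1)}}{d^{\alpha}} \Phi(k_n)^{\delta}.
\end{equation*}

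Next I would make the ansatz $\Phi(k_n) \le \Phi(k_0)\, r^{-n}$ for a geometric rate $r>1$ to be chosen, and plug it into the recursion. A short computation shows that the induction step closes provided
\begin{equation*}
\frac{c\, 2^{\alpha}\, \Phi(k_0)^{\delta-1}}{d^{\alpha}} \le r^{-(\delta-1)n + (n - 1)},
\end{equation*}
and the natural way to eliminate the $n$-dependence on the right-hand side is to take $r^{\delta-1} = 2^{\alpha}$, i.e.\ $r = 2^{\alpha/(\delta-1)}$. With this choice the requirement collapses to
\begin{equation*}
d^{\alpha} \ge c\, \Phi(k_0)^{\delta-1}\, 2^{\alpha + \alpha/(\delta-1)} = c\, \Phi(k_0)^{\delta-1}\, 2^{\alpha\delta/(\delta-1)},
\end{equation*}
which is exactly the value of $d^{\alpha}$ prescribed in the statement. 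So choosing $d$ as in the lemma, the induction goes through and $\Phi(k_n) \le \Phi(k_0)\, 2^{-\alpha n/(\delta-1)} \to 0$.

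Finally, since $\Phi$ is non-increasing and $k_n \le k_0 + d$, we have $\Phi(k_0+d) \le \Phi(k_n)$ for every $n$, and letting $n\to\infty$ gives $\Phi(k_0+d) \le 0$; combined with non-negativity this yields $\Phi(k_0+d)=0$, as claimed. The main (and really only) subtle point is the algebraic bookkeeping needed to identify the correct geometric ratio $r=2^{\alpha/(\delta-1)}$ that makes the $n$-dependent factors cancel and simultaneously produces the prescribed constant $2^{\alpha\delta/(\delta-1)}$ in $d^{\alpha}$; the degenerate case $\Phi(k_0)=0$ is trivial since then $d=0$ and $\Phi$ being non-increasing forces $\Phi\equiv 0$ on $[k_0,\infty)$.
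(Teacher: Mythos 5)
Your proof is correct and is the standard Stampacchia iteration argument; the paper does not prove this lemma itself but quotes it from \cite[Lemma~B.1]{kinderlehrer1980}, whose proof is exactly the level-sequence induction you carry out. One minor slip: the exponent in your intermediate display ($r^{-(\delta-1)n+(n-1)}$) is garbled --- the induction step actually requires $c\,2^{\alpha}d^{-\alpha}\Phi(k_0)^{\delta-1}\le 2^{-\alpha n}r^{(\delta-1)n-1}$ --- but your choice $r=2^{\alpha/(\delta-1)}$ and the resulting condition $d^{\alpha}\ge c\,\Phi(k_0)^{\delta-1}2^{\alpha\delta/(\delta-1)}$ are exactly right, so the argument closes as claimed.
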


\begin{proof}[\bf Proof of Theorem \ref{Main1}]
We prove the result in several steps.

{\bf Step 1}: Firstly, we show that there is a unique $u\in  \widetilde{W}_0^{s,2}(\Om)$ satisfying \eqref{Eq-G1}. It suffices to show that the right hand side of \eqref{Eq-G1} defines a continuous linear functional on $\widetilde{W}_0^{s,2}(\Om)$. Indeed, recall that $\widetilde{W}_0^{s,2}(\Om)\hookrightarrow L^{p'}(\Omega)$ by Remark \ref{remark}. Hence,
using this embedding and the classical H\"older inequality, we get that there is a constant $C>0$ such that 
\begin{align*}
&\left|\int_{\Omega}f_0v\;dx+\int_{\Omega}\int_{\Omega}f_1(x,y) D_{s,2} v[x,y] \;dxdy\right|\\
\le& \|f_0\|_{L^p(\Omega)}\|v\|_{L^{p'}(\Omega)}+\|f_1\|_{L^2(\Omega\times\Omega)}\|D_{2,s}v\|_{L^2(\Omega\times\Omega)}\\
\le&C \left(\|f_0\|_{L^p(\Omega)}      +\|f_1\|_{L^2(\Omega\times\Omega)} \right)  \|v\|_{\widetilde{W}_0^{s,2}(\Om)}.
\end{align*}
Since the bilinear form $\mathcal E$ is continuous and coercive,  it follows from the classical Lax-Milgram lemma that there is unique function $u\in \widetilde{W}_0^{s,2}(\Om)$ satisfying \eqref{Eq-G1}.

{\bf Step 2}: Notice that if $N<2s$, it follows from the embedding \eqref{inj1} that $u\in L^\infty(\Omega)$. We give the proof for the case $N>2s$. The case $N=2s$ follows with a simple modification of the case $N>2s$. Therefore, throughout the proof we assume that $N>2s$. 

{\bf Step 3}: Let $u\in \widetilde{W}_0^{s,2}(\Om)$ be the unique function satisfying \eqref{Eq-G1}. Let $k\ge 0$ be a real number and set $u_k:=(|u|-k)^+\mbox{sgn}(u)$. By \cite[Lemma 2.7]{War} we have that $u_k\in \widetilde{W}_0^{s,2}(\Om)$ for every $k\ge 0$. Proceeding exactly as in the proof of \cite[Theorem 2.9]{AnPfWa2017} (see also \cite[Proposition 3.10 and Section 3.3]{antil2017optimal}) we get that
\begin{align}\label{Eq-G3}
\mathcal E(u_k,u_k)\le \mathcal E(u_k,u) =\int_{\Omega}f_0u_k\;dx+\int_{\Omega}\int_{\Omega}f_1(x,y) D_{s,2} u_k[x,y] \;dxdy,
\end{align}
for every $k\ge 0$.

Let $A_k:=\{x\in\Omega:\;|u(x)|\ge k\}$. Then it is clear that
\begin{equation}\label{Eq-G4}
u_k=
\begin{cases}
(|u|-k)\mbox{sign}(u)\;\;\;&\mbox{ in }\; A_k\\
0&\mbox{ in }\;\Omega\setminus A_k.
\end{cases}
\end{equation}
Let $p_1\in [1,\infty]$ be such that
\begin{align*}
\frac{1}{p}+\frac{1}{2^\star}+\frac{1}{p_1}=1,
\end{align*}
where we recall that $2^\star:=\frac{2N}{N-2s}$.
Since by assumption $p>\frac{N}{2s}=\frac{2^\star}{2^\star-2}$, we have that
\begin{align}\label{Eq1}
\frac{1}{p_1}=1-\frac{1}{2^\star}-\frac{1}{p}=\frac{{2^\star}}{{2^\star}}-\frac{1}{2^\star}-\frac{1}{p}>\frac{{2^\star}}{{2^\star}}-\frac{1}{2^\star}-\frac{2^\star-2}{2^\star}=\frac{1}{2^\star}\Longrightarrow p_1<2^\star.
\end{align}
Using  \eqref{Eq-G4}, the continuous embedding  $ \widetilde{W}_0^{s,2}(\Om) \hookrightarrow L^{2^\star}(\Om)$, and the H\"older inequality, we get that there is a constant $C>0$ such that
\begin{align}\label{Eq-G5}
\int_{\Omega}f_0u_k\;dx=\int_{A_k}f_0u_k\;dx\le& \|f_0\|_{L^p(\Omega)}\|u_k\|_{L^{2^\star}(\Omega)}  \|\chi_{A_k}\|_{L^{p_1}(\Omega)}\notag\\
\le&\|f_0\|_{L^p(\Omega)}\|u_k\|_{ \widetilde{W}_0^{s,2}(\Om)}  \|\chi_{A_k}\|_{L^{p_1}(\Omega)},
\end{align}
fore every $k\ge 0$.

Let $\delta_1:=\frac{2^\star}{p_1}>1$ by \eqref{Eq1}. Using the H\"older inequality again, we get that there is a constant $C>0$ such that for every $k\ge 0$, we have
\begin{align}\label{Eq1-1}
 \|\chi_{A_k}\|_{L^{p_1}(\Omega)}\le C \|\chi_{A_k}\|_{L^{2^\star}(\Omega)}^{\delta_1}.
\end{align}

{\bf Step 4}: Next, let $q_1\in [1,\infty]$ be such that 
\begin{align*}
\frac{1}{q}+\frac{1}{2}+\frac{1}{q_1}=1.
\end{align*}
Since by assumption $q>\frac{N}{s}=2\frac{N}{2s}=2\frac{2^\star}{2^\star-2}$, we have that
\begin{align}\label{Eq-G6}
\frac{1}{q_1}=1-\frac{1}{2}-\frac{1}{q}=2\frac{2^\star}{2\cdot 2^\star}-\frac{1}{2}-\frac{1}{q}>\frac{{2\cdot 2^\star}}{{2\cdot 2^\star}}-\frac{1}{2}-\frac{2^\star-2}{2\cdot 2^\star}=\frac{1}{2^\star}\Longrightarrow q_1<2^\star.
\end{align}
Using \eqref{Eq-G4}, the continuous embedding $\widetilde{W}_0^{s,2}(\Om) \hookrightarrow L^{2^\star}(\Om)$, and the H\"older inequality again, we can deduce that there is a constant $C>0$ such that
\begin{align}\label{Eq-G7}
&\int_{\Omega}\int_{\Omega}f_1(x,y) D_{s,2} u_k[x,y] \;dxdy=\int_{A_k}\int_{A_k}f_1(x,y) D_{s,2} u_k[x,y] \;dxdy\notag\\
&+\int_{A_k}\int_{\Omega\setminus A_k}f_1(x,y) D_{s,2} u_k[x,y] \;dxdy+\int_{\Omega\setminus A_k}\int_{A_k}f_1(x,y) D_{s,2} u_k[x,y] \;dxdy\notag\\
\le&\|f_1\|_{L^q(\Omega\times\Omega)}\|D_{s,2}u_k\|_{L^2(\Omega\times\Omega)}\|\chi_{A_k\times A_k}\|_{L^{q_1}(\Omega\times\Omega)}\notag\\
&+2\|f_1\|_{L^q(\Omega\times\Omega)}\|D_{s,2}u_k\|_{L^2(\Omega\times\Omega)}\|\chi_{A_k\times (\Om\setminus A_k)}\|_{L^{q_1}(\Omega\times\Omega)}\notag\\
\le &C\|f_1\|_{L^q(\Omega\times\Omega)}\|u_k\|_{ \widetilde{W}_0^{s,2}(\Om)}  \|\chi_{A_k}\|_{L^{q_1}(\Omega)},
\end{align}
for every $k\ge 0$. 

Let $\delta_2:=\frac{2^\star}{q_1}>1$ by \eqref{Eq-G6}. Using the H\"older inequality we get that there is a constant $C>0$ such that for every $k\ge 0$, we have
\begin{align}\label{Eq1-2}
 \|\chi_{A_k}\|_{L^{q_1}(\Omega)}\le C \|\chi_{A_k}\|_{L^{2^\star}(\Omega)}^{\delta_2}.
\end{align}

{\bf Step 5}: Let $\delta:=\min\{\delta_1,\delta_2\}>1$. It follows from \eqref{Eq1-1} that there is a constant $C>0$ such that for every $k\ge 0$
\begin{align*}
 \|\chi_{A_k}\|_{L^{p_1}(\Omega)}\le C \|\chi_{A_k}\|_{L^{2^\star}(\Omega)}^{\delta_1}=  \|\chi_{A_k}\|_{L^{2^\star}(\Omega)}^{\delta} \|\chi_{A_k}\|_{L^{2^\star}(\Omega)}^{\delta_1-\delta}\le  \|\chi_{A_k}\|_{L^{2^\star}(\Omega)}^{\delta}.
 \end{align*}

Similarly, it follows from \eqref{Eq1-2} that there is a constant $C>0$ such that for every $k\ge0$
\begin{align*}
 \|\chi_{A_k}\|_{L^{q_1}(\Omega)}\le C \|\chi_{A_k}\|_{L^{2^\star}(\Omega)}^{\delta_2}=  \|\chi_{A_k}\|_{L^{2^\star}(\Omega)}^{\delta} \|\chi_{A_k}\|_{L^{2^\star}(\Omega)}^{\delta_2-\delta}\le  \|\chi_{A_k}\|_{L^{2^\star}(\Omega)}^{\delta}.
 \end{align*}

We have shown that that there is a constant $C>0$ such that 
\begin{align}\label{Eq1-3}
 \max\{\|\chi_{A_k}\|_{L^{p_1}(\Om)},  \|\chi_{A_k}\|_{L^{q_1}(\Omega)}\}\le C \|\chi_{A_k}\|_{L^{2^\star}(\Omega)}^{\delta},
\end{align}
for every $k\ge 0$.

Using \eqref{Eq-G3}, \eqref{Eq-G5}, \eqref{Eq-G7}, \eqref{Eq1-3} and the fact that there is a constant $C>0$ such that 
\begin{align*}
C\|u_k\|_{ \widetilde{W}_0^{s,2}(\Om)}\le \mathcal E(u_k,u_k),
\end{align*}
 we get that there is a constant $C>0$ such that for every $k\ge 0$, we have
\begin{align}\label{Eq-G8}
\|u_k\|_{ \widetilde{W}_0^{s,2}(\Om)}\le C\left(\|f_0\|_{L^p(\Omega)}+\|f_1\|_{L^q(\Omega\times\Omega)}\right)  \|\chi_{A_k}\|_{L^{2^\star}(\Omega)}^\delta.
\end{align}
Using the continuous embedding $\widetilde{W}_0^{s,2}(\Om) \hookrightarrow L^{2^\star}(\Om)$ and \eqref{Eq-G8}, we get that there is a constant $C>0$ such that for every $k\ge 0$, we have 
\begin{align}\label{Eq-G9}
\|u_k\|_{L^{2^\star}(\Om)}\le C\left(\|f_0\|_{L^p(\Omega)}+\|f_1\|_{L^q(\Omega\times\Omega)}\right)  \|\chi_{A_k}\|_{L^{2^\star}(\Omega)}^\delta.
\end{align}

{\bf Step 6}: Now let $h>k\ge 0$. Then $A_h\subset A_k$ and in $A_h$ we have that $|u_k|\ge (h-k)$. Thus, it follows from \eqref{Eq-G9} that there is a constant $C>0$ such that for every $h>k\ge 0$, 
\begin{align}\label{Eq-G10}
  \|\chi_{A_h}\|_{L^{2^\star}(\Omega)}\le C(h-k)^{-1}\left(\|f_0\|_{L^p(\Omega)}+\|f_1\|_{L^q(\Omega\times\Omega)}\right)  \|\chi_{A_k}\|_{L^{2^\star}(\Omega)}^\delta.
\end{align}
Let $\Phi(k):= \|\chi_{A_k}\|_{L^{2^\star}(\Om)}$. It follows from \eqref{Eq-G10} that

\begin{align*}
\Phi(h)\le C(h-k)^{-1}\left(\|f_0\|_{L^p(\Omega)}+\|f_1\|_{L^q(\Omega\times\Omega)}\right) \Phi(k),
\end{align*}
for all $h>k\ge 0$.
Finally, applying Lemma \ref{lem-01} to the function, $\Phi$, we can deduce that there is a constant $C_1>0$ such that
\begin{align*}
\Phi(K)=0\;\mbox{ with }\; K=C_1C\left(\|f_0\|_{L^p(\Omega)}+\|f_1\|_{L^q(\Omega\times\Omega)}\right).
\end{align*}
We have shown the estimate \eqref{Eq-G2} and the proof is finished.
\end{proof}

We also have the following regularity result for solutions to the problem \eqref{eq:Sd} which is the second main result of this section. 
Here, we reduce the regularity of datum $z$, compare  with Proposition~\ref{prop:Antil_Warma}. In view of Step 2 in the proof of Lemma~\ref{lem-01}, we shall focus on the case $N>2s$.

\begin{theorem}\label{Main2}
Let $\Omega\subset\RR^N$ be a bounded open set with a Lipschitz continuous boundary.
Let $0<t<s<1$, $2<\frac{N}{s}<p\le\infty$, $1 \le p'=\frac{p}{p-1} < 2$ and $\frac{1}{p'} < t < 1$. Then for every $z\in \widetilde{W}^{-t,p}(\Om)$, there is a unique solution $u\in \widetilde W_0^{s,2}(\Om)$ of the Dirichlet problem \eqref{eq:Sd}.
In addition, $u\in L^\infty(\Omega)$ and there is a constant $C>0$ such that
\begin{align}\label{M00}
\|u\|_{L^\infty(\Omega)}\le C\|z\|_{ \widetilde{W}^{-t,p}(\Om)}.
\end{align}
\end{theorem}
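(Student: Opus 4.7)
The overall strategy is a De Giorgi--Stampacchia iteration modelled on the proof of Theorem~\ref{Main1}, but adapted to the fact that the datum $z$ now lives only in a negative-order dual space. The existence and uniqueness of $u\in \widetilde{W}_0^{s,2}(\Om)$ reduce to Lax--Milgram as soon as one has the continuous embedding $\widetilde{W}_0^{s,2}(\Om)\hookrightarrow \widetilde{W}_0^{t,p'}(\Om)$ (and hence, by duality, $\widetilde{W}^{-t,p}(\Om)\hookrightarrow \widetilde{W}^{-s,2}(\Om)$), which is itself the content of the weighted H\"older estimate below applied with $k=0$.

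For the $L^\infty$ bound I would introduce the truncation $u_k:=(|u|-k)^+\mbox{sgn}(u)$ and the level set $A_k:=\{x\in\Omega:\,|u(x)|\ge k\}$. Exactly as in Step 3 of the proof of Theorem~\ref{Main1}, $u_k\in\widetilde W_0^{s,2}(\Om)$ and $\mathcal E(u_k,u_k)\le \mathcal E(u_k,u)=\langle z,u_k\rangle_{\widetilde W^{-t,p},\widetilde W_0^{t,p'}}$. The key nonroutine step is to bound $\|u_k\|_{\widetilde W_0^{t,p'}}$ by $\|u_k\|_{\widetilde W_0^{s,2}}$ times a positive power of $|A_k|$. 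Because $1/p'<t$, Theorem~\ref{thm:gris2} lets us replace the $\widetilde W_0^{t,p'}$-norm by $\|D_{t,p'} u_k\|_{L^{p'}(\Om\times\Om)}$. Writing
$$
D_{t,p'} u_k[x,y] = D_{s,2} u_k[x,y]\,|x-y|^{\alpha},\qquad \alpha:=\frac{N}{2}+s-\frac{N}{p'}-t,
$$
observing that the integrand is supported in $(A_k\times\Om)\cup(\Om\times A_k)$, and applying H\"older's inequality with the conjugate exponents $(2/p',\,2/(2-p'))$, I would obtain
$$
\|u_k\|_{\widetilde W_0^{t,p'}(\Om)}\le C\,\|u_k\|_{\widetilde W_0^{s,2}(\Om)}\,|A_k|^{1/p'-1/2}.
$$
The weighted integral $\iint_{A_k\times\Om}|x-y|^{2p'\alpha/(2-p')}\,dxdy\le C|A_k|$ is finite precisely when $2p'\alpha/(2-p')>-N$; an elementary computation shows that this condition is equivalent exactly to $s>t$.

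Combining the previous estimate with the coercivity of $\mathcal E$ on $\widetilde W_0^{s,2}(\Om)$ and the embedding $\widetilde W_0^{s,2}(\Om)\hookrightarrow L^{2^\star}(\Om)$ produces
$$
\|u_k\|_{L^{2^\star}(\Om)}\le C\,\|z\|_{\widetilde W^{-t,p}(\Om)}\,\|\chi_{A_k}\|_{L^{2^\star}(\Om)}^\delta,\qquad \delta:=\frac{N(p-2)}{p(N-2s)}.
$$
Setting $\Phi(k):=\|\chi_{A_k}\|_{L^{2^\star}(\Om)}$ and using the elementary comparison $(h-k)\Phi(h)\le \|u_k\|_{L^{2^\star}(\Om)}$ valid for $h>k\ge 0$, the Stampacchia-type Lemma~\ref{lem-01} applies provided $\delta>1$. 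A short algebraic check shows that $\delta>1$ is equivalent to $p>N/s$, which is the remaining hypothesis, and the lemma then yields $\Phi(K)=0$ for some $K\le C\|z\|_{\widetilde W^{-t,p}(\Om)}$, giving \eqref{M00}.

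The main technical obstacle is the weighted H\"older estimate: one needs to extract a strictly positive power of $|A_k|$ from a bound that a priori only delivers a continuous embedding between the two Sobolev spaces. It is a pleasant feature of the argument that the two quantitative hypotheses $s>t$ and $p>N/s$ are consumed at exactly the two places where they are indispensable: the former ensures integrability of the kernel weight $|x-y|^{2p'\alpha/(2-p')}$, and the latter ensures that the iteration exponent $\delta$ is strictly greater than one.
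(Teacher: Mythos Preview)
Your argument is correct and actually takes a different route from the paper's own proof. The paper first invokes the characterization of the dual space $\widetilde{W}^{-t,p}(\Om)$ (Corollary~\ref{cor-dual}) to represent $z$ by a pair $(f^0,f^1)\in L^p(\Om)\times L^p(\Om\times\Om)$, then observes that $|D_{t,p'}v|\le C|D_{s,2}v|$ pointwise (using $s>t$, $p'<2$) so that the weak formulation is dominated by an expression of the form $\int|f^0v|+\int\!\!\int|f^1 D_{s,2}v|$, and finally reruns the iteration of Theorem~\ref{Main1} verbatim. In contrast, you bypass the dual characterization entirely: you bound $\langle z,u_k\rangle$ by $\|z\|_{\widetilde W^{-t,p}}\|u_k\|_{\widetilde W_0^{t,p'}}$ and then prove the quantitative embedding $\|u_k\|_{\widetilde W_0^{t,p'}}\le C\,|A_k|^{1/p'-1/2}\|u_k\|_{\widetilde W_0^{s,2}}$ via a single weighted H\"older estimate. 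Your approach is more self-contained and makes the roles of the two hypotheses completely transparent (the integrability of $|x-y|^{2p'\alpha/(2-p')}$ near the diagonal uses exactly $s>t$, and $\delta>1$ uses exactly $p>N/s$); the paper's approach has the structural advantage of exhibiting Theorem~\ref{Main2} as a corollary of the dual-space characterization, which is one of the paper's advertised contributions. Both routes rest on the same De~Giorgi--Stampacchia skeleton and on Theorem~\ref{thm:gris2} to pass to the $\Om\times\Om$ seminorm.
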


\begin{proof}
We prove the result in several steps.

{\bf Step 1}: Firstly, for $z\in \widetilde{W}^{-t,p}(\Om)$, by a solution to the Dirichlet problem \eqref{eq:Sd}, we mean a function $u\in \widetilde W_0^{s,2}(\bOm)$  satisfying 
\begin{align}\label{M0}
\mathcal E(u,v)=\langle z,v\rangle_{\widetilde{W}^{-t,p}(\Om),\widetilde{W}^{t,p'}(\Om)}, \;\;\;\forall\; v\in \widetilde W_0^{t,p'}(\Om),
\end{align}
provided that the left and right hand sides expressions make sense.

{\bf Step 2}: Secondly, since $\frac{1}{p'} < t < 1$ and $z\in \widetilde{W}^{-t,p}(\Om)$, it follows from Corollary \ref{cor-dual} that there exists a pair of functions $(f^0,f^1)\in L^p(\Omega)\times L^p(\Omega\times\Omega)$ such that
\begin{align}\label{M1}
       \langle z,v\rangle_{\widetilde{W}^{-t,p}(\Om),\widetilde{W}^{t,p'}(\Om)}
            = \int_{\Omega} f^0 v \, dx + \int_{\Om} \int_{\Om} f^1 D_{t,p'} v[x,y] 
                    \; dx \; dy, \quad \forall v\in \widetilde{W}^{t,p'}_0(\Om) .
\end{align}

Choose $(f^0,f^1)\in L^p(\Omega)\times L^p(\Omega\times\Omega)$ satisfying \eqref{M1} and are such that
\begin{align}\label{norm}
\|z\|_{\widetilde{W}^{-t,p}(\Om)}=\|f^0\|_{L^p(\Omega)} +\|f^1\|_{L^p(\Omega\times\Omega)}.
\end{align}

Since $0<t<s<1$ and $2>p'$, it follows from \cite[Proposition 2.2]{antil2018b} that we have the continuous embedding $ \widetilde{W}^{s,2}_0(\Om) \hookrightarrow  \widetilde{W}^{t,p'}_0(\Om)$. More precisely, there is a constant $C>0$ such that

\begin{align*}
\left| D_{t,p'} v[x,y] \right|=&\frac{|v(x)-v(y)|}{|x-y|^{\frac{p'}{N}+t}}=\frac{|v(x)-v(y)|}{|x-y|^{\frac{2}{N}+s}}\frac{1}{|x-y|^{\frac{p'-2}{N}+t-s}}=\frac{|v(x)-v(y)|}{|x-y|^{\frac{2}{N}+s}}|x-y|^{s-t+\frac{2-p'}{N}}\\
\le &C\frac{|v(x)-v(y)|}{|x-y|^{\frac{2}{N}+s}}=C\left| D_{s,2} v[x,y] \right|,
\end{align*}
where we have used that $s-t+\frac{2-p'}{N}>0$. Thus, for every $v\in \widetilde{W}^{s,2}_0(\Om)$, we have 
\begin{align*}
\|D_{t,p'} v\|_{L^{p'}(\Omega\times\Omega)}\le C\|D_{s.,2} v\|_{L^2(\Omega\times\Omega)}.
\end{align*}
Hence, \eqref{M1} also holds for every $v\in  \widetilde{W}^{s,2}_0(\Om)$. Thus the right and left hand sides of \eqref{M0} make sense.

{\bf Step 3}: We claim that there is a unique $u\in \widetilde W_0^{s,2}(\Om)$ satisfying \eqref{M0}. As in the proof of Theorem \ref{Main1}, we have to show that the right hand side of \eqref{M1} defines a linear continuous functional on $\widetilde W_0^{s,2}(\Om)$. Indeed, let $v\in \widetilde W_0^{s,2}(\Om)$. Using Step 1 and Remark \ref{remark}, we get that there is a constant $C>0$ such that
\begin{align*}
 \left|\langle z,v\rangle_{\widetilde{W}^{-t,p}(\Om),\widetilde{W}^{t,p'}(\Om)} \right|
            =&\left| \int_{\Omega} f^0 v \, dx + \int_{\Om} \int_{\Om} f^1 D_{t,p'} v[x,y] 
                    \; dx \; dy\right| , \quad \forall v\in \widetilde{W}^{t,p'}_0(\Om) \\
             \le &\|f^0\|_{L^p(\Omega)}\|v\|_{L^{p'}(\Omega)} +\|f^1\|_{L^p(\Omega\times\Omega)}\|D_{t,p'} v\|_{L^{p'}(\Omega\times\Omega)}\\
             \le &C\left(\|f^0\|_{L^p(\Omega)}\|v\|_{L^{2}(\Omega)} +\|f^1\|_{L^p(\Omega\times\Omega)}\|D_{t,p'} v\|_{L^{2}(\Omega\times\Omega)}\right)\\
        =&C\left(\|f^0\|_{L^p(\Omega)} +\|f^1\|_{L^p(\Omega\times\Omega)}\right)\|v\|_{\widetilde{W}^{s,2}_0(\Om)},
\end{align*}
and the claim is proved.

{\bf Step 4}:  It follows from Step 3 that the unique $u\in \widetilde{W}^{s,2}_0(\Om)$ satisfying \eqref{M0} is such that
\begin{align*}
\mathcal E(u,v)= \langle z,v\rangle_{\widetilde{W}^{-t,p}(\Om),\widetilde{W}^{t,p'}(\Om)}\le C\int_{\Omega}|f^0v|\;dx+ \int_{\Om} \int_{\Om}\left| f^1 D_{s,2} v[x,y] \right|
                    \; dx \; dy.
\end{align*}
Therefore, proceeding exactly as in the proof of Theorem \ref{Main1}, we get that $u\in L^\infty(\Omega)$ and there is a constant $C>0$ such that
\begin{align*}
\|u\|_{L^\infty(\Om)}\le C\left(\|f^0\|_{L^p(\Omega)} +\|f^1\|_{L^p(\Omega\times\Omega)}\right)=C\|z\|_{\widetilde{W}^{-t,p}(\Om)},
\end{align*}
where we have used \eqref{norm}.
We have shown the estimate \eqref{M00} and the proof is finished.
\end{proof}

We have the following regularity result as a corollary of Theorems \ref{Main2} and \ref{pbound}.

\begin{corollary}\label{cor:Main2}
Let $\Omega\subset\RR^N$ be a bounded Lipschitz domain satisfying the exterior cone condition.
Let $0<t<s<1$, $2<\frac{N}{s}<p\le\infty$, $1 \le p'=\frac{p}{p-1} < 2$ and $\frac{1}{p'} < t < 1$. Let $z\in \widetilde{W}^{-t,p}(\Om)$ and let $u\in \widetilde W_0^{s,2}(\Om)$ be the unique solution of \eqref{eq:Sd}. Then $u\in C_0(\Omega)$.
\end{corollary}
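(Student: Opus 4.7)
My plan is to mimic the density-and-stability argument that proved Theorem \ref{pbound}, but now with the weaker datum space $\widetilde W^{-t,p}(\Omega)$ in place of $L^p(\Omega)$ and the stability estimate \eqref{M00} from Theorem \ref{Main2} playing the role previously played by \eqref{inf-norm}. Observe first that the hypothesis $\frac{1}{p'} < t < 1$ forces $p' > 1$, hence $p < \infty$, so that the pre-dual $\widetilde W_0^{t,p'}(\Omega)$ is reflexive; this reflexivity is what makes the approximation step go through.

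The first task is to show that $L^\infty(\Omega)$ is dense in $\widetilde W^{-t,p}(\Omega)$. Since $L^\infty(\Omega)$ is dense in $L^p(\Omega)$, it suffices to prove that the natural embedding $L^p(\Omega) \hookrightarrow \widetilde W^{-t,p}(\Omega)$ — obtained by dualising the continuous dense inclusion $\widetilde W_0^{t,p'}(\Omega) \hookrightarrow L^{p'}(\Omega)$ (density comes from density of $\mathcal{D}(\Omega)$ in both spaces) — has dense range. Because $\widetilde W^{-t,p}(\Omega)$ is reflexive, Hahn–Banach reduces this to the following annihilator computation: if $\phi \in \widetilde W_0^{t,p'}(\Omega)$ satisfies $\int_\Omega f \phi\,dx = 0$ for every $f \in L^p(\Omega)$, then testing with $f = |\phi|^{p'-2}\phi \in L^p(\Omega)$ yields $\phi = 0$.

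With density secured, I would pick $z_n \in L^\infty(\Omega)$ with $z_n \to z$ in $\widetilde W^{-t,p}(\Omega)$ and let $u_n \in \widetilde W_0^{s,2}(\Omega)$ solve the Dirichlet problem with datum $z_n$. Since $z_n \in L^\infty(\Omega) \subset L^p(\Omega)$ and $p > N/s > N/(2s)$, Theorem \ref{pbound} gives $u_n \in C_0(\Omega)$. The difference $u - u_n$ solves the Dirichlet problem with datum $z - z_n \in \widetilde W^{-t,p}(\Omega)$, so Theorem \ref{Main2} produces
\[
\|u - u_n\|_{L^\infty(\Omega)} \le C \|z - z_n\|_{\widetilde W^{-t,p}(\Omega)} \longrightarrow 0 .
\]
Because $C_0(\Omega)$ is a closed subspace of $L^\infty(\Omega)$, the uniform limit $u$ lies in $C_0(\Omega)$, which is the desired conclusion.

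The main obstacle is the density claim; everything else is a routine assembly of earlier results. If one preferred to avoid invoking reflexivity abstractly, an alternative would be to use the concrete characterisation in Corollary \ref{cor-dual}: represent $z$ by some pair $(f^0, f^1) \in L^p(\Omega) \times L^p(\Omega \times \Omega)$, approximate each component coordinatewise by smooth compactly supported functions, and transfer the approximation to the level of dual functionals via the norm identity \eqref{norm21}, thereby producing an explicit approximating sequence $z_n$ for which the solutions $u_n$ are accessible through Theorem \ref{Main1} and Theorem \ref{pbound}.
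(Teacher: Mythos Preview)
Your proposal is correct and follows essentially the same density-and-stability scheme as the paper: approximate $z$ by $z_n\in L^\infty(\Omega)$ in $\widetilde W^{-t,p}(\Omega)$, invoke Theorem~\ref{pbound} to get $u_n\in C_0(\Omega)$, and use the $L^\infty$ estimate of Theorem~\ref{Main2} on $u-u_n$ to pass to the limit. The only difference is that you supply a justification for the density of $L^\infty(\Omega)$ in $\widetilde W^{-t,p}(\Omega)$ (via the Hahn--Banach annihilator argument, together with your observation that $\frac{1}{p'}<t$ forces $p<\infty$), whereas the paper simply asserts the existence of such an approximating sequence without comment.
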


\begin{proof}
Let $z\in \widetilde{W}^{-t,p}(\Om)$ and $\{z_n\}_{n\ge 1}\subset L^\infty(\Om)$  a sequence such that $z_n\to z$ in $\widetilde{W}^{-t,p}(\Om)$ as $n\to\infty$. Let $u_n\in  \widetilde{W}^{s,2}_0(\Om)$ satisfy
\begin{align*}
\mathcal E(u_n,v)= \langle z_n,v\rangle_{\widetilde{W}^{-t,p}(\Om),\widetilde{W}^{t,p'}(\Om)} =\int_{\Om}z_nv\;dx, \;\;\;\forall\; v\in \widetilde W_0^{s,2}(\Om).
\end{align*}
It follows from Theorem \ref{pbound} that $u_n\in C_0(\Omega)$. Since $u_n-u\in  \widetilde W_0^{s,2}(\bOm)$ satisfies
\begin{align*}
\mathcal E(u_n-u,v)= \langle z_n-z,v\rangle_{\widetilde{W}^{-t,p}(\Om),\widetilde{W}^{t,p'}(\Om)} \;\;\;\forall\; v\in \widetilde W_0^{s,2}(\Om),
\end{align*}
it follows from Theorem \ref{Main2} that $u_n-u\in L^\infty(\Omega)$ and there is a constant $C>0$ (independent of $n$) such that
\begin{align*}
\|u_n-u\|_{L^\infty(\Omega)}\le C\|z_n-z\|_{ \widetilde{W}^{-t,p}(\Om)}.
\end{align*}
Since $u_n\in C_0(\Om)$ and $z_n\to z$ in $\widetilde{W}^{-t,p}(\Om)$ as $n\to\infty$, it follows from the preceding estimate that $u_n\to u$ in $L^\infty(\Omega)$ as $n\to\infty$. Thus, $u\in C_0(\Omega)$ and the proof is finished.
\end{proof}

Next we improve the regularity of $u$ solving \eqref{eq:Sda} with measure $\mu$ as the 
right-hand-side. Notice that such a result will immediately improve the regularity of
the adjoint variable $\bar{\xi}$ solving \eqref{eq:b}. Recall that the best result so far proved
for the solution to \eqref{eq:Sda} is given in Theorem~\ref{thm:mesdata}. 
    \begin{corollary}\label{cor:Main3}
        Let $\Omega\subset\RR^N$ be a bounded Lipschitz domain 
        satisfying the exterior cone condition.
        Let $0<t<s<1$, $2<\frac{N}{s}<p\le\infty$, $1 \le  p'=\frac{p}{p-1} < 2$ and 
        $\frac{1}{p'} < t < 1$. Let $\mu\in \mathcal{M}(\Om)$. Then there is a unique 
        solution $u \in \widetilde{W}_0^{t,p'}(\Om)$ to \eqref{eq:Sda} and there is a constant 
        $C > 0$ such that
        \[
            \|u\|_{\widetilde{W}_0^{t,p'}(\Om)} \le C \|\mu\|_{\mathcal{M}(\Om)} .
        \]
    \end{corollary}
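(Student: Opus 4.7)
The plan is to imitate the transposition argument of Theorem~\ref{thm:mesdata}, but to upgrade the test-side solution operator using the finer regularity supplied by Theorem~\ref{Main2} together with Corollary~\ref{cor:Main2}. For $\xi \in \widetilde{W}^{-t,p}(\Om)$, the hypotheses on $(t,s,p,p')$ are exactly those of Theorem~\ref{Main2}, so there is a unique $v\in \widetilde{W}_0^{s,2}(\Om)$ solving $(-\Delta)^s v=\xi$; Corollary~\ref{cor:Main2} in addition places $v\in C_0(\Om)$ with $\|v\|_{C_0(\Om)}\le C\,\|\xi\|_{\widetilde{W}^{-t,p}(\Om)}$. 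This yields a well-defined, linear, continuous ``solution operator''
\[
   \widetilde{\Xi}\,:\,\widetilde{W}^{-t,p}(\Om)\longrightarrow C_0(\Om),\qquad \widetilde{\Xi}\xi:=v.
\]

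Next I would take the Banach adjoint $\widetilde{\Xi}^{\star}:\mathcal{M}(\Om)=(C_0(\Om))^{\star}\to (\widetilde{W}^{-t,p}(\Om))^{\star}$. For $1<p'<2$ (equivalently $2<p<\infty$), the space $\widetilde{W}_0^{t,p'}(\Om)$ is reflexive, so $(\widetilde{W}^{-t,p}(\Om))^{\star}\cong \widetilde{W}_0^{t,p'}(\Om)$ isometrically. Setting $u:=\widetilde{\Xi}^{\star}\mu$, the required bound is immediate from the boundedness of $\widetilde{\Xi}$:
\[
   \|u\|_{\widetilde{W}_0^{t,p'}(\Om)}\le \|\widetilde{\Xi}^{\star}\|\,\|\mu\|_{\mathcal{M}(\Om)}=\|\widetilde{\Xi}\|\,\|\mu\|_{\mathcal{M}(\Om)}\le C\,\|\mu\|_{\mathcal{M}(\Om)}.
\]

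To confirm that this $u$ actually solves \eqref{eq:Sda} in the very-weak sense of Definition~\ref{def:vweak_d}, I would test against $v\in V$: set $\xi:=(-\Delta)^s v\in L^p(\Om)\hookrightarrow \widetilde{W}^{-t,p}(\Om)$; by uniqueness in Theorem~\ref{Main2} one has $\widetilde{\Xi}\xi=v$, so
\[
   \int_\Om u\,(-\Delta)^s v\,dx=\langle u,\xi\rangle_{\widetilde{W}_0^{t,p'}(\Om),\,\widetilde{W}^{-t,p}(\Om)}=\langle \mu,\widetilde{\Xi}\xi\rangle_{\mathcal{M}(\Om),\,C_0(\Om)}=\int_\Om v\,d\mu.
\]
Since the embedding $\widetilde{W}_0^{t,p'}(\Om)\hookrightarrow L^{p'}(\Om)$ is continuous, $u\in L^{p'}(\Om)$, and uniqueness then follows from Theorem~\ref{thm:mesdata}.

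The delicate point is the endpoint $p=\infty$, $p'=1$, where $\widetilde{W}_0^{t,1}(\Om)$ fails to be reflexive and the identification $(\widetilde{W}^{-t,\infty}(\Om))^{\star}\cong \widetilde{W}_0^{t,1}(\Om)$ is no longer automatic; the adjoint $\widetilde{\Xi}^{\star}$ a priori lands only in the bidual. I would handle this by first establishing the conclusion for finite $p\in(N/s,\infty)$ and then passing to the limit $p\uparrow\infty$ by means of the uniform estimate, extracting a weakly convergent subsequence and invoking the uniqueness from Theorem~\ref{thm:mesdata} to identify the limit as the desired $u$.
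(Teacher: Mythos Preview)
Your argument is exactly the paper's: it reruns the transposition proof of Theorem~\ref{thm:mesdata}, replacing Theorem~\ref{pbound} by Corollary~\ref{cor:Main2} so that the test datum $\xi$ ranges over $\widetilde{W}^{-t,p}(\Om)$, and then reads off $u\in(\widetilde{W}^{-t,p}(\Om))^{\star}=\widetilde{W}_0^{t,p'}(\Om)$. Your worry about the endpoint $p=\infty$, $p'=1$ is unnecessary, since the hypothesis $\tfrac{1}{p'}<t<1$ forces $p'>1$ (otherwise $1<t<1$), so the non-reflexive case never occurs.
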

    
    \begin{proof}
        The proof follows exactly as the proof of Theorem~\ref{thm:mesdata} with the exception that for the 
        inequality \eqref{eq:imp}, we use Corollary~\ref{cor:Main2} to arrive at 
        \[
            \left| \int_\Om u\xi \;dx \right| 
                \le \|\mu\|_{\mathcal{M}(\Om)} \|v\|_{C_0(\Om)}
                \le C \|\mu\|_{\mathcal{M}(\Om)} \|\xi\|_{\widetilde{W}^{-t,p}(\Om)} .
        \]
The preceding estimate implies that $u \in (\widetilde{W}^{-t,p}(\Om))^\star = \widetilde{W}_0^{t,p'}(\Om)$. The proof is finished.
    \end{proof}
Recall that the ``strong form" of the adjoint equation \eqref{eq:b} is given by     
    \begin{equation}\label{eq:Sdadd}
     \begin{cases}
        (-\Delta)^s \bar{\xi} = J_u(\bar{u},\bar{z}) + \bar\mu \quad \;\;\; &\mbox{ in } \Om, \\
                    \bar{\xi} = 0 \quad &\mbox{cin } \RR^N\setminus \Om  .
     \end{cases}                
    \end{equation}
Then using Corollary~\ref{cor:Main3} and the fact that $J_u(\bar{u},\bar{z}) \in L^2(\Om)$, we obtain the following regularity result for the adjoint variable $\bar{\xi}$. 
    \begin{corollary}[\bf Regularity of the adjoint variable]
    \label{cor:Main4}
        Let $\bar{\mu}\in \mathcal{M}(\Om)$ and let $\bar{\xi}$ be the Lagrange multiplier given in 
        Theorem~\ref{thm:optcond}. Then under the conditions of Corollary~\ref{cor:Main3}, 
        we have that $\bar{\xi} \in \widetilde{W}_0^{t,p'}(\Om)$. 
    \end{corollary}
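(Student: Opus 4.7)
The plan is to exploit the linearity of the fractional Laplacian and split the right-hand side of the adjoint equation \eqref{eq:Sdadd} into its regular part $J_u(\bar u,\bar z)\in L^2(\Omega)$ and its singular part $\bar\mu\in\mathcal{M}(\Omega)$. Writing $\bar\xi=\bar\xi_1+\bar\xi_2$ where
\begin{equation*}
(-\Delta)^s\bar\xi_1=J_u(\bar u,\bar z)\;\;\mbox{in}\;\Omega,\qquad (-\Delta)^s\bar\xi_2=\bar\mu\;\;\mbox{in}\;\Omega,
\end{equation*}
with zero exterior Dirichlet condition in each case, the claim $\bar\xi\in\widetilde W_0^{t,p'}(\Omega)$ will follow as soon as each summand has been placed in that space.

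For $\bar\xi_2$, the right-hand side is a Radon measure, so Corollary~\ref{cor:Main3} applies directly and yields $\bar\xi_2\in\widetilde W_0^{t,p'}(\Omega)$ together with the quantitative estimate $\|\bar\xi_2\|_{\widetilde W_0^{t,p'}(\Omega)}\le C\|\bar\mu\|_{\mathcal{M}(\Omega)}$. For $\bar\xi_1$, the datum lies in $L^2(\Omega)\hookrightarrow\widetilde W^{-s,2}(\Omega)$ (using Remark~\ref{remark} together with duality), so the classical Lax--Milgram argument (as carried out in Step 1 of the proof of Theorem~\ref{Main1}) provides a unique weak solution $\bar\xi_1\in\widetilde W_0^{s,2}(\Omega)$.

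It then remains to observe that under the hypotheses $0<t<s<1$, $p'<2$ and $\frac{1}{p'}<t<1$ (which are the standing assumptions of Corollary~\ref{cor:Main3}), we have the continuous embedding $\widetilde W_0^{s,2}(\Omega)\hookrightarrow\widetilde W_0^{t,p'}(\Omega)$. This embedding has already been established pointwise on the difference quotients in Step 2 of the proof of Theorem~\ref{Main2}: the inequality
\begin{equation*}
|D_{t,p'}v[x,y]|\le C\,|x-y|^{s-t+\frac{2-p'}{N}}\,|D_{s,2}v[x,y]|
\end{equation*}
together with the boundedness of $\Omega$ (to absorb the positive power of $|x-y|$) gives $\|D_{t,p'}v\|_{L^{p'}(\Omega\times\Omega)}\le C\|D_{s,2}v\|_{L^2(\Omega\times\Omega)}$. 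Consequently $\bar\xi_1\in\widetilde W_0^{t,p'}(\Omega)$, and adding the two contributions completes the proof.

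The only real subtlety is checking that the exponent $s-t+\frac{2-p'}{N}$ is strictly positive, which is immediate from $t<s$ and $p'<2$; everything else is a bookkeeping exercise using results already proved. I therefore do not expect any substantial obstacle beyond making the splitting rigorous (in particular, that the weak formulation of \eqref{eq:Sdadd} decomposes linearly, which is built into Definition~\ref{def:weak_d} and Definition~\ref{def:vweak_d}).
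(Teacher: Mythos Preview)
Your proof is correct and uses the same essential ingredients as the paper---Corollary~\ref{cor:Main3} together with $J_u(\bar u,\bar z)\in L^2(\Omega)$. The paper's (one-line) argument is marginally more direct: since $L^2(\Omega)\hookrightarrow L^1(\Omega)\hookrightarrow\mathcal M(\Omega)$ on a bounded domain, the full right-hand side $J_u(\bar u,\bar z)+\bar\mu$ already lies in $\mathcal M(\Omega)$, so Corollary~\ref{cor:Main3} applies to the adjoint equation \eqref{eq:Sdadd} as a whole without any splitting. Your decomposition $\bar\xi=\bar\xi_1+\bar\xi_2$ and the embedding $\widetilde W_0^{s,2}(\Omega)\hookrightarrow\widetilde W_0^{t,p'}(\Omega)$ reach the same conclusion with a little more work, but have the advantage of isolating the regular and singular contributions.
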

We conclude the paper with the following remark.

\begin{remark}[\bf Regularity for the controls]
In case of the widely used cost functional 
$$J(u,z) := \frac{1}{2}\|u-u_d\|^2_{L^2(\Om)} + \frac{\alpha}{2} \|z\|^2_{L^2(\Om)},$$
 where 
$u_d \in L^2(\Om)$ is the given datum
from \eqref{eq:c}, invoking the standard projection formula type approach  (see e.g., \cite{FTroeltzsch_2010a}), it is possible to show that $\bar{z}$ has the same regularity as $\bar{\xi}$. 
\end{remark}

\bibliographystyle{plain}
\bibliography{refs}

\end{document}